\documentclass[11pt,a4paper]{article}
\usepackage[american]{babel}      

\usepackage{graphicx} 
\usepackage{amsmath}
\usepackage{mathrsfs}
\usepackage{amssymb}
\usepackage{xcolor}
\usepackage{mathtools}
\usepackage{amsthm}
\usepackage{comment}
\usepackage{float}
\usepackage{tcolorbox}

\theoremstyle{plain}
\newtheorem{theorem}{Theorem}[section]
\theoremstyle{plain}

\theoremstyle{plain}

\theoremstyle{plain}
\newtheorem{lemma}{Lemma}[section]
\theoremstyle{plain}
\newtheorem{remark}{Remark}[section]
\theoremstyle{plain}

\theoremstyle{plain}

\title{A Kantorovich version of Bernstein-type logarithmic operators}
\author{\textit{Laura Angeloni\thanks{Corresponding author}$\ ^1$, Danilo Costarelli$\ ^1$, Chiara Darielli$\ ^2$} \\
\\
$\ ^1$ Department of Mathematics and Computer Science \\
            University of Perugia\\
       1, Via Vanvitelli, 06123 Perugia, Italy    \\  \\
 $\ ^2$ Department of Mathematics and Computer Science\\ 
 University of Firenze\\ 67, Viale Morgagni, 50134 Firenze,  Italy
       \\  \\
  {\small {\tt laura.angeloni@unipg.it}} - {\small {\tt danilo.costarelli@unipg.it}}\\  {\small {\tt 
 chiara.darielli@unifi.it}} 
  }
\date{}

\begin{document}
\maketitle

\begin{abstract}
In this paper, we introduce a Kantorovich version of the Bernstein-type logarithmic operators. The idea comes from the wide literature concerning exponential polynomials that preserve exponential functions: here, the exponential weights are replaced by logarithmic ones and the corresponding operators preserve the logarithmic functions. The pointwise, the uniform and the $L^p$ convergence are first established. Then, a Voronovskaja-type asymptotic formula is derived: from it, a second-order differential operator naturally arises, allowing the characterization of the corresponding saturation class. Finally, quantitative estimates for the order of approximation are provided in the continuous case, in terms of the modulus of continuity, and, in the $L^p$ case, by means of suitable $K$-functionals.
\end{abstract}
\vskip0.2cm

\noindent {\bf Keywords:} positive linear operators; logarithm
preservation; constructive approximation;  saturation by solving differential
problems; modulus of continuity; K-functionals.

\vskip0.3cm

\noindent {\bf AMS Subjclass:} 46A32, 47L05, 47G10, 41A60, 41A25, 41A30   

\section{Introduction}
Bernstein-type operators have a central role in analysis and approximation theory. The classical Bernstein polynomials, defined for bounded functions $f:[0,1]\rightarrow \mathbb{R}$ as
\begin{align*}
B_nf(x)=B_n(f,x) := \sum_{k=0}^{n} f\left(\frac{k}{n}\right)p_{n,k}(x),
\end{align*} 
where 
\begin{align*}
p_{n,k}(x) := \binom{n}{k} x^k (1-x)^{\,n-k}, \qquad x \in [0,1],
\end{align*}
are indeed classical constructive operators that approximate continuous functions on compact intervals (see, e.g., \cite{lorentz1986bernstein,ditzian1987rate,ConApp,DIV,Bustamante}). Over the years, such positive linear operators have inspired several lines of research. Moreover, they were the starting point for many extensions and generalizations aimed at improving approximation properties or enlarging the domain and the function spaces where the approximation processes are performed.

An important extension is given by the Kantorovich version of $B_n$ (see, e.g., \cite{acu2022generalized}), which is defined, for $n \in \mathbb{N}$, as
\begin{align*}
    {K}_n f(x)= {K}_n (f,x):= \sum_{k=0}^n p_{n,k}(x)(n+1) \int\limits_{\frac{k}{n+1}}^{\frac{k+1}{n+1}} f(t)\,dt, \qquad x \in [0,1].
\end{align*}
The operators ${K}_n$ are obtained through an integral formulation, in which the pointwise values in $B_n$ of the function are replaced by integral means.

Besides this, other remarkable generalizations of the Bernstein polynomials were proposed and studied in the literature, such as the King-type operators (\cite{king,gonska2009general,Popa}), the Szász–Mirakjan-type operators (\cite{duman2007szasz, abel2025asymptotic, abel2025commutativity}), the sampling–type operators (\cite{
akhmedov2021mixed,draganov2025weighted,aiello}) and many others (\cite{gonska1991global,Aldaz,Rahman2019,gupta2020approximation,coroianu2022approximation, duman2022approximation, altomare2024convergence,ACD, costarelli2026strong,acu2024bernstein,natale}).

Among them, a remarkable exponential generalization of the classical Bernstein polynomials was introduced by Aral, 
Cárdenas-Morales and Garrancho in~\cite{BerType}. This construction, referred to as Bernstein-type exponential polynomials, is a special case of a more general family of operators previously studied by Morigi and Neamtu in~\cite{Morigi}. In particular, for a fixed real parameter $\mu > 0$, for bounded functions $f:[0,1]\rightarrow \mathbb{R}$ and $n \in \mathbb{N}$, the operators are defined as
\begin{align*} \displaystyle
    \mathscr{G}_nf(x) = \mathscr{G}_n(f,x):= \sum_{k=0}^n f\left( \frac{k}{n} \right) e^{-\mu k / n}e^{\mu x} p_{n,k}(\tilde a_n(x)),
\end{align*} where
\begin{align*}
    \tilde a_n(x)=\frac{e^{\mu x/n}-1}{e^{\mu/n}-1}, \qquad x \in [0,1],
\end{align*}
interpolates $0$ and $1$, is increasing and convex on \([0,1]\).

Following this line of research, several Kantorovich versions of such exponential operators have been proposed (see, e.g., \cite{aral2019approximation, Aral2022, Angeloni2024}). Among them, in particular, the following operators
\begin{align}\label{Kn}
\mathscr{G}_n^K f(x)= \mathscr{G}_n^K(f,x) := \sum_{k=0}^n e^{\mu x}\, p_{n,k}(\tilde a_{n+1}(x))\,(n+1)
\int\limits_{\tfrac{k}{n+1}}^{\tfrac{k+1}{n+1}} f(t)\, e^{-\mu t}\, dt,
\end{align}
with $x \in [0,1]$, $\mu>0$, $n \in \mathbb{N}$, were introduced and studied (\cite{Angeloni2024}). Here, of course, the pointwise values in ${\cal G}_n$ are replaced by suitable weighted integral means.

More recently, motivated by the exponential case, a new class of Bernstein-type operators based on logarithmic  transformations has been introduced in~\cite{Angeloni2025}. The idea is to replace the exponential weights with logarithmic ones, thereby obtaining a structurally similar yet distinct approximation scheme. 
Specifically, we let $\ln_\mu(x):=\ln(1+\mu+x),$ with $\mu >0$, and for a bounded function $f: [0,1] \to \mathbb{R}$ and $n \in \mathbb{N}$, we define the operator as
\begin{align*}
    \mathscr{L}_nf(x) = \mathscr{L}_n(f,x):=\ln_\mu(x)\sum_{k=0}^n f\left( \frac{k}{n} \right) \frac{1}{\ln_\mu\left(\frac{k}{n}\right)}\hspace{1mm}p_{n,k}(a_n(x)),
\end{align*} where
\begin{align*} \displaystyle
a_n(x) = \frac{\ln\left(1+\frac{x}{n(1+\mu)}\right)}{\ln\left(1+\frac{1}{n(1+\mu)}\right)}, \qquad x\in[0,1].
\end{align*} This function is such that $a_n(0)=0$ and $a_n(1)=1$, it is increasing, concave on \([0,1]\) and $a_n(x)\ge x$ for every $x\in [0,1]$.
The purpose of this paper is to introduce a Kantorovich version, in the direction of \eqref{Kn}, of the logarithmic operators $\mathscr{L}_nf$, defined for $n \in \mathbb{N}$ and $\mu > 0$ by
\[
\mathscr{L}_{n}^{K}f(x) := \ln_\mu(x) \sum_{k=0}^n p_{n,k}(a_{n+1}(x)) (n+1) \int\limits_{\frac{k}{n+1}}^{\frac{k+1}{n+1}} \frac{f(t)}{\ln_\mu(t)}\,dt, 
\]
with $x\in [0,1]$, and to study their main approximation properties. 
The main difference between $\mathscr{L}_{n}$ and $\mathscr{L}_{n}^{K}$ is that the latter ones are suitable to reconstruct not necessarily continuous functions. For instance, in spaces where functions are identified by means of equivalence classes, as for example in Lebesgue spaces, the convergence of $\mathscr{L}_{n}$ in general fails, while by means of $\mathscr{L}_{n}^{K}$ we can obtain approximations also in $L^p$. Therefore, they allow the approximation process to be extended to broader classes of functions.

As in the case of the operators $\mathscr{L}_n$, we first establish pointwise and uniform convergence by means of a constructive approach. In addition, we study the convergence of $\mathscr{L}_n^K$ in $L^p$ spaces, $1\le p < +\infty$, which represents one of the main advancement with respect to \cite{Angeloni2025}.
Furthermore, we derive a Voronovskaja-type formula from which a second-order differential operator naturally arises. The set of the solutions of the associated homogeneous differential equation characterizes the saturation (Favard) class of the operators.

We also obtain quantitative estimates for the error of approximation. In the continuous case, these estimates are expressed in terms of the classical modulus of continuity, while, in the $L^p$ case, they are obtained by means of a Peetre $K$-functional for functions in $C^1([0,1])$, together with a slightly modified version, denoted by $\widetilde{K}$. More precisely, the definition of $\widetilde{K}$ involves the norm of functions belonging to the Sobolev space $W^{1,p}([0,1])$ in place of their seminorm, as happens in the usual case. In order to achieve the estimate in the $L^p$ norm, a crucial role is played by the so-called Hardy–Littlewood maximal inequality.

The above results in the $L^p$-setting, which are typical when working with Kantorovich-type operators,  represent a novelty with respect to \cite{Angeloni2025}. Moreover, the introduction of the K-functional $\widetilde{K}$ represents a key point in the study of the order of approximation of King-type operators. Finally, it can be useful to point out the role of the parameter $\mu>0$ in the definition of $\ln_\mu$ and consequently of $\mathscr{L}_n^K$. First, it guarantees the well-posedness of the definition of the operators; second, it is also connected to some potential applicative motivations.  Indeed, in \cite{Angeloni2025} (see Section 7) we proposed a toy model to reconstruct signals affected by multiplicative-type noise exploiting the linearizing property of the logarithmic function. In this type of application, the parameter $\mu$ plays an important role since it can be viewed as a Gaussian random variable connected to the noise that affects the signal.

\section{Kantorovich-type logarithmic operators}
Motivated by the recent definition of Bernstein-type logarithmic operators, we introduce their Kantorovich counterpart. 

For $n \in \mathbb{N}$, $\mu > 0$ and $f:[0,1]\to \mathbb{R}$ bounded, we define
\begin{align*}
\mathscr{L}_{n}^{K}f(x)=\mathscr{L}_{n}^{K}(f,x) := \ln_\mu(x) \sum_{k=0}^n p_{n,k}(a_{n+1}(x)) (n+1) \int\limits_{\tfrac{k}{n+1}}^{\tfrac{k+1}{n+1}} f_\mu(t)\,dt,
\end{align*}
where
\begin{align}\label{ln}
    \ln_\mu(x):=\ln(1+\mu+x),
\end{align}
and $f_\mu(x):=\frac{f(x)}{\ln_\mu(x)}$, with $x\in[0,1]$. The presence of the parameter $\mu>0$ ensures that $\ln_\mu(x)$ is strictly positive on $[0,1]$, making the operators well-defined. 

The function $a_{n+1}(x)$, that is defined as
\begin{align} \label{an+1}
a_{n+1}(x)=\frac{\ln\left(1+\frac{x}{(n+1)(1+\mu)}\right)}{\ln\left(1+\frac{1}{(n+1)(1+\mu)}\right)},
\end{align} is such that \( a_{n+1}(0) = 0 \) and \( a_{n+1}(1) = 1 \). Moreover, it is increasing and concave on \([0,1]\), with $a_{n+1}(x)\ge x$ for every $x\in [0,1]$. The convergence properties of the sequence $(a_{n+1}(x))_n$ are given in the following lemmas established in \cite{Angeloni2025} (see Lemma 3.1 and equation (17), respectively).

\begin{lemma}\label{lemma_an+1}
Let \(a_{n+1}(x)\) be defined as in \eqref{an+1}. Then, the sequence \( (a_{n+1}(x))_n \) converges uniformly to \( e_1(x) \equiv x \) on \([0,1]\).
\end{lemma}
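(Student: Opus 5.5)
Setting $h_n := \frac{1}{(n+1)(1+\mu)} \to 0$, so that
\[
a_{n+1}(x) = \frac{\ln(1+h_n x)}{\ln(1+h_n)}, \qquad x\in[0,1],
\]
the plan is to bound $a_{n+1}(x)-x$ uniformly in $x$ by a quantity depending only on $n$ and tending to zero. Since $a_{n+1}(x)\ge x$ on $[0,1]$, as recalled just before the statement, it suffices to bound $a_{n+1}(x)-x$ from above.

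The key step is a two-sided elementary inequality for the logarithm: for $u\ge 0$,
\[
u-\frac{u^2}{2}\le \ln(1+u)\le u .
\]
Applying the upper bound in the numerator with $u=h_n x$ and the lower bound in the denominator with $u=h_n$ gives
\[
0\le a_{n+1}(x)-x \le \frac{h_n x}{h_n - h_n^2/2} - x = x\left(\frac{1}{1-h_n/2}-1\right) = \frac{x\,h_n}{2-h_n}\le \frac{h_n}{2-h_n}.
\]
The denominator is positive since $h_n<1$. This yields
\[
\sup_{x\in[0,1]}|a_{n+1}(x)-x|\le \frac{h_n}{2-h_n} = \frac{1}{2(n+1)(1+\mu)-1}\longrightarrow 0 \quad (n\to\infty),
\]
which is the uniform convergence, together with the explicit rate $O(1/n)$.

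There is no genuine obstacle here. The only point requiring care is to choose the direction of each logarithmic inequality so that the bound runs the correct way. The property $a_{n+1}(x)\ge x$ can be used as given, or verified from concavity of $a_{n+1}$ together with $a_{n+1}(0)=0$ and $a_{n+1}(1)=1$. As an alternative, one could also derive the result from the corresponding statement in \cite{Angeloni2025} for $a_n$, replacing $n$ by $n+1$.
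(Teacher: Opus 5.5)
Your proof is correct and complete. For $h_n<1$, the bound $u-u^2/2\le \ln(1+u)\le u$ gives $\ln(1+h_n)\ge h_n(1-h_n/2)>0$. Together with $a_{n+1}(x)\ge x$ (which does follow from concavity and $a_{n+1}(0)=0$, $a_{n+1}(1)=1$), this yields $0\le a_{n+1}(x)-x\le \frac{1}{2(n+1)(1+\mu)-1}$ for all $x\in[0,1]$ and all $n$.

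The paper gives no argument of its own: it quotes Lemma 3.1 of \cite{Angeloni2025}, which is the alternative you mention at the end. Your self-contained route buys an explicit, non-asymptotic bound on $\gamma_n$ in \eqref{gamma_n}. The paper reaches $\gamma_n=\mathcal{O}(1/n)$ only in the final Remark, via $a_{n+1}(x)-x=\varepsilon_{n+1}\frac{x-x^2}{2}+o(\varepsilon_{n+1})$ (your $h_n$ is that $\varepsilon_{n+1}$), and it does not discuss whether the remainder is uniform in $x$. Your inequality settles this directly and makes $T_n$ in \eqref{Tn} explicit for every $n$.

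The price is the constant. Your bound is of size $h_n x/2$ and does not vanish at $x=1$, while Lemma \ref{lemma_n(an+1-x)} shows the true size is about $h_n(x-x^2)/2$. To recover that shape, use the sharper inequality $\ln(1+u)\le u-u^2/2+u^3/3$ for $u\ge 0$ in the numerator (its derivative check gives $u^3/(1+u)\ge 0$). This yields
\[
0\le a_{n+1}(x)-x\le \frac{h_n\,\frac{x-x^2}{2}+h_n^2\,\frac{x^3}{3}}{1-h_n/2},\qquad x\in[0,1],
\]
and hence $\gamma_n\le h_n/8+\mathcal{O}(h_n^2)$.
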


\begin{lemma}\label{lemma_n(an+1-x)}
Let $a_{n+1}(x)$ be defined as in \eqref{an+1}. Then, there holds
\begin{align*}
\lim_{n \to +\infty} n \,(a_{n+1}(x) - x) = \frac{x - x^2}{2(1+\mu)}, \ \ x\in [0,1].
\end{align*}
\end{lemma}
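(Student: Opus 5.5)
We need to prove that $\lim_{n\to+\infty} n\,(a_{n+1}(x)-x) = \frac{x-x^2}{2(1+\mu)}$ for every $x\in[0,1]$. The plan is a direct second-order Taylor expansion of the logarithms defining $a_{n+1}$. Set $h=h_n:=\frac{1}{(n+1)(1+\mu)}\to 0^+$, so that $a_{n+1}(x)=\ln(1+xh)/\ln(1+h)$. At $x=0$ and $x=1$ both sides vanish, since $a_{n+1}(0)=0$ and $a_{n+1}(1)=1$. Hence we may fix $x\in(0,1)$, although the computation below works uniformly for $x\in[0,1]$.

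First we use $\ln(1+u)=u-\frac{u^2}{2}+O(u^3)$ as $u\to0$, with an $O$-constant independent of $x$ because $|xh|\le h$. This gives
\[
a_{n+1}(x)=\frac{xh-\frac{x^2h^2}{2}+O(h^3)}{h-\frac{h^2}{2}+O(h^3)}=\frac{x-\frac{x^2h}{2}+O(h^2)}{1-\frac{h}{2}+O(h^2)} .
\]
Next we expand $\bigl(1-\frac h2+O(h^2)\bigr)^{-1}=1+\frac h2+O(h^2)$, which yields
\[
a_{n+1}(x)=x+\frac{h}{2}\,(x-x^2)+O(h^2).
\]

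Finally we multiply by $n$. Since $nh_n=\frac{n}{(n+1)(1+\mu)}\to\frac{1}{1+\mu}$ and $nh_n^2\to 0$, we get $n(a_{n+1}(x)-x)\to\frac{x-x^2}{2(1+\mu)}$, which is the claim.

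None of the steps is really an obstacle. The only point needing care is to confirm that the remainders are $O(h^2)$ uniformly in $x$, which follows from $|xh|\le h$ and the Lagrange form of the Taylor remainder. Uniformity is not needed for the pointwise statement, but it gives uniform convergence on $[0,1]$ at no extra cost. As a byproduct, the same expansion also yields Lemma \ref{lemma_an+1}, since $|a_{n+1}(x)-x|\le Ch_n$.
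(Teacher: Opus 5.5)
Your proof is correct and follows essentially the route the paper indicates. The lemma itself is only quoted from the earlier work on $\mathscr{L}_n$, but the paper's final Remark obtains $a_{n+1}(x)-x=\varepsilon_{n+1}\frac{x-x^2}{2}+o(\varepsilon_{n+1})$ by the same Taylor expansion of the logarithm combined with the geometric-series expansion of the denominator; your uniform $O(h^2)$ remainder additionally yields Lemma~\ref{lemma_an+1} and $\gamma_n=\mathcal{O}(1/n)$ for free.
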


Concerning the above operators, we now observe that \( (\mathscr{L}_n^K)_n \) are positive linear operators that preserve the logarithmic function \( \ln_\mu(x) \), defined in \eqref{ln}. Indeed,
\begin{align*}
\mathscr{L}_n^{K}\ln_\mu(x) 
&= \ln_\mu(x) \sum_{k=0}^n p_{n,k}(a_{n+1}(x)) (n+1) \int\limits_{\frac{k}{n+1}}^{\frac{k+1}{n+1}} dt
\\&= \ln_\mu(x) \sum_{k=0}^n p_{n,k}(a_{n+1}(x)) = \ln_\mu(x),
\end{align*}
taking into account that $\sum\limits_{k=0}^n p_{n,k}(a_{n+1}(x)) = 1$ for every $x \in [0,1]$. Hence, the operators $\mathscr{L}_n^{K}f(x)$ fix the function $\ln_\mu(x)$.

\section{Convergence results}
In this section, we establish the pointwise and uniform convergence of the sequence of operators \((\mathscr{L}_n^K)_n\) through a constructive approach. Additionally, we study the convergence of the operators in the $L^p$ spaces. We begin with the following.

\begin{theorem}\label{ConvCostr}If $f:[0,1]\rightarrow \mathbb{R}$ is bounded, then 
\begin{align*} \displaystyle 
\lim_{n \rightarrow +\infty} \mathscr{L}_n^Kf(x) =f(x),
\end{align*}
at every point $x\in[0,1]$ where $f$ is continuous. Moreover, for every $f\in C([0,1])$, there holds
\begin{align*}
    \lim_{n\to +\infty} \mathscr{L}_{n}^{K}f(x) = f(x), \quad \text{uniformly \,\,\,for }\,\, x \in [0,1].
\end{align*}
\end{theorem}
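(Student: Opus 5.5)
We argue directly, by a localization argument based on the preservation property $\mathscr{L}_n^K\ln_\mu=\ln_\mu$. Since $\ln_\mu(x)>0$ and $\mathscr{L}_n^K\ln_\mu(x)=\ln_\mu(x)$, we can write
\[
\mathscr{L}_n^Kf(x)-f(x)=\ln_\mu(x)\sum_{k=0}^n p_{n,k}(a_{n+1}(x))\,(n+1)\int_{\frac{k}{n+1}}^{\frac{k+1}{n+1}}\bigl(f_\mu(t)-f_\mu(x)\bigr)\,dt .
\]
Since $\ln_\mu$ is continuous and bounded away from zero on $[0,1]$, $f_\mu$ is bounded, and it is continuous at every point where $f$ is. It therefore suffices to show that the right-hand side tends to zero. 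We also note $\ln_\mu(x)\le \ln(2+\mu)$.

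Fix $\varepsilon>0$ and a continuity point $x$ of $f$, and choose $\delta>0$ with $|f_\mu(t)-f_\mu(x)|<\varepsilon$ for $|t-x|<\delta$. We split the sum into two families of indices $k$:
\begin{itemize}
\item[(i)] those with $|k/n-x|<\delta/2$;
\item[(ii)] the remaining ones.
\end{itemize}
For $n$ large, every $t\in[\frac{k}{n+1},\frac{k+1}{n+1}]$ satisfies $|t-k/n|\le 2/(n+1)$. Hence, for the indices in (i) we have $|t-x|<\delta$, and their contribution is at most $\ln(2+\mu)\,\varepsilon\sum_k p_{n,k}=\ln(2+\mu)\,\varepsilon$. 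For the indices in (ii), the integrand is bounded by $2\|f_\mu\|_\infty$, so their contribution is at most
\[
2\ln(2+\mu)\|f_\mu\|_\infty\sum_{|k/n-x|\ge\delta/2}p_{n,k}(a_{n+1}(x)).
\]

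The main step is to bound this tail sum. We use Chebyshev's inequality with the Bernstein moments evaluated at $y=a_{n+1}(x)$:
\[
\sum_{k}\Bigl(\tfrac kn-x\Bigr)^2p_{n,k}(y)=\frac{y(1-y)}{n}+(y-x)^2\le \frac1{4n}+\|a_{n+1}-e_1\|_\infty^2 .
\]
Thus the tail sum is at most $\frac{4}{\delta^2}\bigl(\frac1{4n}+\|a_{n+1}-e_1\|_\infty^2\bigr)$, and this tends to $0$ uniformly in $x$ by Lemma \ref{lemma_an+1}. Hence $\limsup_n|\mathscr{L}_n^Kf(x)-f(x)|\le \ln(2+\mu)\,\varepsilon$, which gives pointwise convergence.

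For $f\in C([0,1])$, the function $f_\mu$ is uniformly continuous, so $\delta$ can be chosen independently of $x$. All the bounds above are then uniform in $x\in[0,1]$, and uniform convergence follows. The only delicate point is the uniformity of the tail estimate, which is supplied by the uniform convergence of $a_{n+1}$ in Lemma \ref{lemma_an+1}.
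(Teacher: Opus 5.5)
Your proof is correct, and it differs from the paper's in two respects.

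First, the paper does not use the reproduction of $\ln_\mu$ to pass to $f_\mu$. It splits $|\mathscr{L}_n^Kf(x)-f(x)|\le|\mathscr{L}_n^Kf(x)-f(x)\mathscr{L}_n^Ke_0(x)|+|f(x)|\,|\mathscr{L}_n^Ke_0(x)-e_0(x)|$ and runs the near/far argument on $f$ itself, bounding $1/\ln_\mu(t)$ by $1/\ln(1+\mu)$. It then repeats the same argument for the defect term, using the uniform continuity of $\ln_\mu$. Your identity collapses this into a single term that needs only the continuity of $f_\mu$. The paper adopts exactly this identity later, in its modulus-of-continuity estimate.

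Second, the tail estimates differ. The paper uses a Markov-type bound through the first absolute moment, $\sum_k|x-\frac{k}{n+1}|\,p_{n,k}(a_{n+1}(x))\le|x-a_{n+1}(x)|+\sqrt{2}/\sqrt{n+1}$, imported from the external estimate \eqref{DisLemma5.1}. You use Chebyshev with the exact Bernstein second moment at the nodes $k/n$. This is self-contained and yields the explicit uniform bound $\frac{4}{\delta^2}\bigl(\frac{1}{4n}+\|a_{n+1}-e_1\|_\infty^2\bigr)$.

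The paper's route isolates $|\mathscr{L}_n^Ke_0-e_0|$ and relies on the same first-moment estimate that later drives its quantitative and $K$-functional bounds. Yours is shorter and gives the cleaner conclusion $\limsup_n|\mathscr{L}_n^Kf(x)-f(x)|\le\ln(2+\mu)\,\varepsilon$. It also treats the pointwise case explicitly instead of by analogy.

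A small sharpening: since $\frac{k}{n}\in\bigl[\frac{k}{n+1},\frac{k+1}{n+1}\bigr]$ for $0\le k\le n$, you have $|t-\frac{k}{n}|\le\frac{1}{n+1}$ for every $n$. Largeness of $n$ is therefore needed only to ensure $\frac{1}{n+1}\le\frac{\delta}{2}$.
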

\begin{proof}
We focus on proving the uniform convergence, as the pointwise convergence can be obtained using analogous reasonings. \\[0.2em]
Since $f \in C([0,1])$, it is uniformly continuous. 
Therefore, for every $\varepsilon>0$ there exists $\delta(\varepsilon)>0$ such that
\begin{align*}
|f(x)-f(y)| \le \varepsilon,
\end{align*}
for every $x,y \in [0,1]$ with $|x-y| \le \delta$. Let now $\varepsilon>0$ be fixed.
First, we write
\begin{align*}
|\mathscr{L}_n^K f(x) - f(x)| &= \bigl|\mathscr{L}_n^K f(x) - f(x)\,\mathscr{L}_n^K e_0(x)
+ f(x)\,\mathscr{L}_n^K e_0(x) - f(x)\bigl|\\[1em]
&\leq \bigl|\mathscr{L}_n^K f(x) - f(x)\,\mathscr{L}_n^K e_0(x)\bigl|
+ \bigl|f(x)\,\mathscr{L}_n^K e_0(x) - f(x)\bigl|\\[1em]
&=:I_1+I_2.
\end{align*}
Now, by the definition of \(\mathscr{L}_n^K\), we have
\begin{align*}
I_1
&= \left|\,
\ln_\mu(x)\sum_{k=0}^n p_{n,k}(a_{n+1}(x))\,(n+1)
\int\limits_{\frac{k}{n+1}}^{\frac{k+1}{n+1}} f_\mu(t) dt\right.\\[0.5em]
&\left.\hspace{6mm}-f(x)\ln_\mu(x)\sum_{k=0}^n p_{n,k}(a_{n+1}(x))\,(n+1)
\int\limits_{\frac{k}{n+1}}^{\frac{k+1}{n+1}} \frac{1}{\ln_\mu(t)} dt
\right|\\[0.5em]
&\leq \ln_\mu(x)\sum_{k=0}^n p_{n,k}(a_{n+1}(x))\,(n+1)
\int\limits_{\frac{k}{n+1}}^{\frac{k+1}{n+1}} \frac{|f(t)-f(x)|}{\ln_\mu(t)}\,dt.
\end{align*}

Since \(\ln_\mu(t) \ge \ln(1+\mu) > 0\) for every \(t\ge 0\), we obtain
\begin{align*}
I_1 &\le \frac{\ln_\mu(x)}{\ln(1+\mu)}\sum_{k=0}^n p_{n,k}(a_{n+1})\,(n+1)\int\limits_{\frac{k}{n+1}}^{\frac{k+1}{n+1}} |f(t)-f(x)|\, dt\\[0.5em]
&\le \frac{\ln_\mu(x)}{\ln(1+\mu)}\Bigg\{\sum_{\left|x-\frac{k}{n+1}\right|\le\frac{\delta}{2}}+\sum_{\left|x-\frac{k}{n+1}\right|>\frac{\delta}{2}}\Bigg\}\,\, p_{n,k}(a_{n+1}(x))\,(n+1) \\[0.5em]&\hspace{7mm}\cdot\int\limits_{\frac{k}{n+1}}^{\frac{k+1}{n+1}} |f(t)-f(x)|\, dt =: I_{1,1} + I_{1,2},
\end{align*}
where $\delta >0$ is the parameter of the uniform continuity of $f$ correspondingly to $\varepsilon$.
For the sum $I_{1,1}$, by the uniform continuity of $f$, if 
$\left|x - \frac{k}{n+1}\right| \le \frac{\delta}{2}$, then for every 
$t \in \left[\frac{k}{n+1},\frac{k+1}{n+1}\right]$ we have
\[
|x - t| \le \left|x - \frac{k}{n+1}\right| 
      + \left|\frac{k}{n+1} - t\right|
      \le \frac{\delta}{2} + \frac{1}{n+1}\le \delta,
\]
provided that $\frac{1}{n+1} \le \frac{\delta}{2}$. 
Therefore, for $n$ sufficiently large,
\begin{align*}
I_{1,1} &\le \frac{\ln_\mu(x)}{\ln(1+\mu)} \sum_{\left|x - \frac{k}{n+1}\right| \le \frac{\delta}{2}} p_{n,k}(a_{n+1}(x))\,(n+1)
\int\limits_{\frac{k}{n+1}}^{\frac{k+1}{n+1}} \varepsilon \, dt \\[0.5em]
&= \frac{\ln_\mu(x)}{\ln(1+\mu)} \,\varepsilon \sum_{\left|x - \frac{k}{n+1}\right| \le \frac{\delta}{2}} p_{n,k}(a_{n+1}(x)) \le\frac{\ln(2+\mu)}{\ln(1+\mu)} \,\varepsilon.
\end{align*}
Moreover, for the sum $I_{1,2}$, we note that
\begin{align*}
I_{1,2}&\le\frac{2\Vert f\Vert_\infty\ln(2+\mu)}{\ln(1+\mu)}\sum_{\left|x-\frac{k}{n+1}\right|>\frac{\delta}{2}}p_{n,k}(a_{n+1}(x))\\
&= \frac{2\Vert f\Vert_\infty\ln(2+\mu)}{\ln(1+\mu)}\sum_{\left|x-\frac{k}{n+1}\right|>\frac{\delta}{2}}p_{n,k}(a_{n+1}(x))\frac{\left|x-\frac{k}{n+1}\right|}{\left|x-\frac{k}{n+1}\right|}\\[0.5em]
&\le\frac{{4}\Vert f\Vert_{\infty}\ln(2+\mu)}{\delta\ln(1+\mu)}\sum_{\left|x-\frac{k}{n+1}\right|>\frac{\delta}{2}}\left|x-\frac{k}{n+1}\right|\,\,p_{n,k}(a_{n+1}(x)).
\end{align*}
Considering the last term in the above equation we get:
\begin{align*} 
    \sum_{\left|x-\frac{k}{n+1}\right|>\frac{\delta}{2}} \Big|x-\frac{k}{n+1}\Big|\,\,p_{n,k}(&a_{n+1}(x))\le\sum_{\left|x-\frac{k}{n+1}\right|>\frac{\delta}{2}} |x-a_{n+1}(x)|\,\,p_{n,k}(a_{n+1}(x))\\[0.5em]
    &\hspace{7mm}+\sum_{\left|x-\frac{k}{n+1}\right|>\frac{\delta}{2}} \Big|a_{n+1}(x)-\frac{k}{n+1}\Big|\,\,p_{n,k}(a_{n+1}(x)).
\end{align*}
By Lemma \ref{lemma_an+1}, we have that $a_{n+1}(x)\to x$ uniformly, as $n\to +\infty,$ hence there exists $N_1\in \mathbb{N}$ such that, for $n\ge N_1$,
\[
|x-a_{n+1}(x)|<\varepsilon\delta,
\]
for all $x\in [0,1]$. Moreover, by Lemma~5.1 of \cite{Angeloni2024}, which states that
\begin{align} \label{DisLemma5.1}
\sum_{k=0}^n \left|y-\frac{k}{n+1}\right|p_{n,k}(y)<\frac{\sqrt{2}}{\sqrt{n+1}},
\end{align} 
for every $y\in [0,1]$ and $n>1$, it follows that
\begin{align*} \displaystyle
    \sum_{\left|x-\frac{k}{n+1}\right|>\frac{\delta}{2}} \left|a_{n+1}(x)-\frac{k}{n+1}\right|p_{n,k}(a_{n+1}(x))<\frac{\sqrt{2}}{\sqrt{n+1}}.
\end{align*}
Now, taking $N_2$ large enough so that for $n\ge N_2$ we have that $\frac{\sqrt{2}}{\sqrt{n+1}}<\varepsilon\delta$, and setting $N=\max\{N_1,N_2\}$, for $n\ge N$ we obtain
\[
\sum_{|x-\frac{k}{n+1}|>\frac{\delta}{2}} \Big|x-\frac{k}{n+1}\Big|\,p_{n,k}(a_{n+1}(x)) \le 2\varepsilon\delta.
\]
Therefore, 
\begin{align*}
I_1 \le I_{1,1}+I_{1,2}\le \varepsilon\ \frac{\ln(2+\mu)}{\ln(1+\mu)}\ (1+{8}\Vert f\Vert_\infty),
\end{align*}
for $n$ sufficiently large. On the other side,
\begin{align*}
I_2&= |f(x)\mathscr{L}^K_n(e_0,x)-f(x)|\le \Vert f\Vert_\infty|\mathscr{L}^K_n(e_0,x)-e_0(x)|\\[0.5em]
&= \Vert f\Vert_\infty\left|\ln_\mu(x)\sum_{k=0}^n p_{n,k}(a_{n+1}(x))\,(n+1)\int\limits_{\frac{k}{n+1}}^{\frac{k+1}{n+1}} \frac{1}{\ln_\mu(t)}\,dt - 1\right|\\[0.5em]
&\le \Vert f\Vert_\infty\ln_\mu(x)\sum_{k=0}^n p_{n,k}(a_{n+1}(x))\,(n+1)\int\limits_{\frac{k}{n+1}}^{\frac{k+1}{n+1}}
\left|\frac{1}{\ln_\mu(t)} - \frac{1}{\ln_\mu(x)}\right|\,dt,
\end{align*}
since obviously \(\sum\limits_{k=0}^n p_{n,k}(a_{n+1})(n+1)\int\limits_{k/(n+1)}^{(k+1)/(n+1)} \,dt= 1\). Now
\begin{align*}
I_2 &\le \|f\|_\infty \ln_\mu(x)\sum_{k=0}^n p_{n,k}(a_{n+1}(x))\,(n+1)\int\limits_{\frac{k}{n+1}}^{\frac{k+1}{n+1}} \frac{|\ln_\mu(t)-\ln_\mu(x)|}{\ln_\mu(x)\ln_\mu(t)}\,dt
\\[0.5em]
&\le \frac{\|f\|_\infty}{\ln(1+\mu)}\sum_{k=0}^n p_{n,k}(a_{n+1}(x))\,(n+1)\int\limits_{\frac{k}{n+1}}^{\frac{k+1}{n+1}} |\ln_\mu(t)-\ln_\mu(x)|\,dt\\[0.5em]
&\le \frac{\|f\|_\infty}{\ln(1+\mu)}\Bigg\{\sum_{\left|x-\frac{k}{n+1}\right|\le\, \frac{\gamma}{{2}}}+\sum_{\left|x-\frac{k}{n+1}\right|>\,  \frac{\gamma}{{2}}}\Bigg\}\,\, p_{n,k}(a_{n+1}(x))\,(n+1)\end{align*}\begin{align*}
&\hspace{-20mm}\cdot\int\limits_{\frac{k}{n+1}}^{\frac{k+1}{n+1}} |\ln_\mu(t)-\ln_\mu(x)|\,dt=:I_{2,1}+I_{2,2},
\end{align*} where $\gamma$ is the parameter for the uniform continuity of $\ln_\mu$ in $[0,1]$, correspondingly to $\varepsilon$. 
In particular, proceeding as in the first part of the proof, we have
\begin{align*}
I_{2,1}
&\le \frac{\|f\|_\infty}{\ln(1+\mu)}\,\varepsilon
\sum_{\left|x-\frac{k}{n+1}\right|\le \frac{\gamma}{{2}}} p_{n,k}(a_{n+1}(x))
\le \frac{\|f\|_\infty}{\ln(1+\mu)}\,\varepsilon,
\end{align*}
and, for sufficiently large $n\in \mathbb{N}$,
\begin{align*}
I_{2,2}\le \frac{2\|f\|_\infty\ln(2+\mu)}{\ln(1+\mu)}
\sum_{\left|x-\frac{k}{n+1}\right|>\,\frac{\gamma}{{2}}} p_{n,k}(a_{n+1}(x)) \le \frac{{8}\,\|f\|_\infty\ln(2+\mu)}{\ln(1+\mu)}\,\varepsilon.
\end{align*}
Therefore, for every \(x\in[0,1]\) and $n$ sufficiently large, we finally get
\[
I_2 = I_{2,1}+I_{2,2} \le 
\frac{\varepsilon\|f\|_\infty}{\ln(1+\mu)}\Big(1 + 8\ln(2+\mu)\Big).
\] In conclusion, for every \(x\in[0,1]\) and $n$ sufficiently large, we obtain
\[
\bigl|\mathscr{L}_n^{K}f(x)-f(x)\bigr|
\le \varepsilon\frac{\ln(2+\mu)}{\ln(1+\mu)}\Bigg\{1+\|f\|_\infty\left(16+\frac{1}{\ln(2+\mu)}\right)\Bigg\}, 
\] from which the thesis immediately follows.\\
\end{proof}
Now, we study the convergence of $\mathscr{L}^K_n$ in $L^p$. In particular, we consider the space $L^p_\mu([0,1])$, that is a weighted $L^p-$space for $1\le p <+\infty$, i.e., the space of the measurable functions $f:[0,1]\to\mathbb{R}$ such that 
\begin{align*}
    \Vert f\Vert_{p,\mu}:=\left\{\int\limits_0^1 |f_\mu(x)|^pdx\right\}^{\frac{1}{p}}<+\infty,
\end{align*} with $f_\mu(x):=\frac{f(x)}{\ln_\mu(x)}$ as usual. 

\begin{theorem}\label{th_Kmu}
If $f\in L^p_\mu([0,1])$, then
\begin{align} \label{Kmu}
    \|\mathscr{L}_n^K f\|_{p,\mu}^p \le K_\mu \,\|f\|_{p,\mu}^p,
\end{align}
with $K_\mu:=\left(1+\frac{1}{1+\mu}\right)$, and
\begin{align*}
    \Vert\mathscr{L}^K_\mu f-f \Vert_{p,\mu}\to 0, \quad \text{as }n \to+\infty.
\end{align*}
\end{theorem}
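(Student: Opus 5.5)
The plan is to work throughout with the weighted functions. By definition, $(\mathscr{L}_n^K f)_\mu(x) = \mathscr{L}_n^K f(x)/\ln_\mu(x)$ is exactly the Kantorovich-type average
\[
(\mathscr{L}_n^K f)_\mu(x)=\sum_{k=0}^n p_{n,k}(a_{n+1}(x))\,(n+1)\int\limits_{\frac{k}{n+1}}^{\frac{k+1}{n+1}} f_\mu(t)\,dt .
\]
So the factor $\ln_\mu(x)$ cancels in the weighted norm.

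For the norm inequality \eqref{Kmu}, first apply Jensen's inequality twice. The weights $p_{n,k}(a_{n+1}(x))$ sum to $1$, and $(n+1)\,dt$ is a probability measure on each subinterval. This gives
\[
|(\mathscr{L}_n^K f)_\mu(x)|^p\le \sum_{k=0}^n p_{n,k}(a_{n+1}(x))\,(n+1)\int\limits_{\frac{k}{n+1}}^{\frac{k+1}{n+1}} |f_\mu(t)|^p\,dt .
\]
Integrating in $x$ over $[0,1]$, the whole problem reduces to bounding $\int_0^1 p_{n,k}(a_{n+1}(x))\,dx$.

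Since $a_{n+1}$ is an increasing bijection of $[0,1]$, substitute $y=a_{n+1}(x)$, so that $dx = dy/a_{n+1}'(x)$. Set $c:=(n+1)(1+\mu)$. Then
\[
\frac{1}{a_{n+1}'(x)} = (c+x)\ln\Bigl(1+\frac1c\Bigr)\le (c+1)\cdot\frac1c = 1+\frac{1}{(n+1)(1+\mu)}\le 1+\frac{1}{1+\mu}=K_\mu .
\]
Combining this with $\int_0^1 p_{n,k}(y)\,dy=\frac{1}{n+1}$ yields $\int_0^1 p_{n,k}(a_{n+1}(x))\,dx\le K_\mu/(n+1)$. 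Summing over $k$, the factors $(n+1)$ cancel and the subintervals tile $[0,1]$, which gives $\|\mathscr{L}_n^K f\|_{p,\mu}^p\le K_\mu\|f\|_{p,\mu}^p$.

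This Jacobian estimate is the only delicate point. The key is the elementary inequality $\ln(1+1/c)\le 1/c$ together with $x\le 1$. The remaining steps are bookkeeping.

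For the convergence, use density. Since $\ln(1+\mu)\le \ln_\mu\le \ln(2+\mu)$, the norm $\|\cdot\|_{p,\mu}$ is equivalent to the usual $L^p$ norm, so $C([0,1])$ is dense in $L^p_\mu([0,1])$. Given $\varepsilon>0$, pick $g\in C([0,1])$ with $\|f-g\|_{p,\mu}<\varepsilon$. By Theorem \ref{ConvCostr}, $\mathscr{L}_n^K g\to g$ uniformly, hence
\[
\|\mathscr{L}_n^K g-g\|_{p,\mu}\le \frac{\|\mathscr{L}_n^K g-g\|_\infty}{\ln(1+\mu)}\to0 .
\]
Then, by linearity and \eqref{Kmu},
\[
\|\mathscr{L}_n^K f-f\|_{p,\mu}\le K_\mu^{1/p}\|f-g\|_{p,\mu}+\|\mathscr{L}_n^K g-g\|_{p,\mu}+\|g-f\|_{p,\mu}\le (K_\mu^{1/p}+1)\varepsilon+o(1).
\]
Since $\varepsilon>0$ is arbitrary, this proves the claim.
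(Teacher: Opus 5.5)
Your proposal is correct and follows essentially the same route as the paper. You cancel $\ln_\mu(x)$, apply Jensen twice, bound $\int_0^1 p_{n,k}(a_{n+1}(x))\,dx$ via the substitution $y=a_{n+1}(x)$ with Jacobian $(c+x)\ln(1+1/c)\le 1+1/c$, $c=(n+1)(1+\mu)$, and conclude by density of $C([0,1])$ together with Theorem~\ref{ConvCostr}. Your last step is in fact slightly more careful: you keep the factor $1/\ln(1+\mu)$ when passing from $\|\cdot\|_{p,\mu}$ to $\|\cdot\|_\infty$, which the paper drops (harmlessly, since it does not affect convergence).
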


\begin{proof}
For $f\in L^p_\mu([0,1])$, we have, by the convexity of $(\cdot)^p$,
\begin{align*}
    \Vert \mathscr{L}_n^Kf\Vert_{p,\mu}^p&=\int\limits_0^1 \left|\frac{\mathscr{L}_n^K f(x)}{\ln_\mu(x)}\right|^pdx\\[0.5em]
    &=\int\limits_0^1 \Bigg|\sum_{k=0}^n p_{n,k}(a_{n+1}(x))(n+1)\int\limits_{\frac{k}{n+1}}^{\frac{k+1}{n+1}}f_\mu(t)\,dt\Bigg|^p dx\\[0.5em]
    &\le \int\limits_0^1 \sum_{k=0}^n p_{n,k}(a_{n+1}(x))\Bigg|(n+1)\int\limits_{\frac{k}{n+1}}^{\frac{k+1}{n+1}}f_\mu(t)\,dt\Bigg|^p dx.
\end{align*}
Now, we can apply the Jensen's inequality in the interval $\left[\frac{k}{n+1},\frac{k+1}{n+1}\right]$, so
\begin{align*}
\Vert \mathscr{L}_n^Kf\Vert_{p,\mu}^p&\le \sum_{k=0}^n \int\limits_0^1 p_{n,k}(a_{n+1}(x))dx\,\, (n+1) \int\limits_{\frac{k}{n+1}}^{\frac{k+1}{n+1}}|f_\mu(t)|^p\,dt\\[0.5em]
&=\sum_{k=0}^n (n+1)\int\limits_{\frac{k}{n+1}}^{\frac{k+1}{n+1}}|f_\mu(t)|^p\,dt\int\limits_0^1 p_{n,k}(a_{n+1}(x))\,dx.
\end{align*}
Now, we estimate the integral
\begin{align*}
    I_{n,k}:=(n+1)\int\limits_0^1 p_{n,k}(a_{n+1}(x))dx,
\end{align*}
with the following change of variable $$t = a_{n+1}(x)=\frac{\ln\left(1+\frac{x}{(n+1)(1+\mu)}\right)}{\ln\left(1+\frac{1}{(n+1)(1+\mu)}\right)},$$ so that
\begin{align*}
    \ln\left(1+\frac{x}{(n+1)(1+\mu)}\right)&=t\cdot\,\ln\left(1+\frac{1}{(n+1)(1+\mu)}\right)\\[0.5em]
    1+\frac{x}{(n+1)(1+\mu)}&=\left(1+\frac{1}{(n+1)(1+\mu)}\right)^t\\[0.5em]
    \hspace{-5mm}x&=(n+1)(1+\mu)\left\{\left(1+\frac{1}{(n+1)(1+\mu)}\right)^t-1\right\},
\end{align*}
and $dx= (n+1)(1+\mu)\cdot\ln\left(1+\frac{1}{(n+1)(1+\mu)}\right)\cdot\left(1+\frac{1}{(n+1)(1+\mu)}\right)^t dt$. In this way, the integral becomes
\begin{align*}
I_{n,k}&=(n+1)^2(1+\mu)\ln\left(1+\frac{1}{(n+1)(1+\mu)}\right)\\[0.5em]&\hspace{5mm}\cdot\int\limits_0^1 p_{n,k}(t)\left(1+\frac{1}{(n+1)(1+\mu)}\right)^t\,dt\\[0.2em]
&\le (n+1)^2(1+\mu)\left(1+\frac{1}{(n+1)(1+\mu)}\right)\ln\left(1+\frac{1}{(n+1)(1+\mu)}\right)\\[0.2em]
&\hspace{5mm}\cdot\int\limits_0^1 p_{n,k}(t)dt\\[-0.6em]
&=(n+1)\Big((n+1)(\mu+1)+1\Big)\ln\left(1+\frac{1}{(n+1)(1+\mu)}\right)\int\limits_0^1 p_{n,k}(t)dt,
\end{align*}
where, for every positive integer $k$ (see \cite{ConApp}):
\begin{align*}
\int\limits_{0}^{1}p_{n,k}(t) dt &= \binom{n}{k} \int\limits_{0}^{1}t^k (1-t)^{n-k}dt = \frac{1}{n+1}.
\end{align*} As a consequence,
\begin{align*}
I_{n,k}&\le\Big((n+1)(\mu+1)+1\Big)\ln\left(1+\frac{1}{(n+1)(1+\mu)}\right)\\[0.5em]
&\le \Big((n+1)(\mu+1)+1\Big)\left(\frac{1}{(n+1)(1+\mu)}\right)\\[0.5em]
&=1+\frac{1}{(n+1)(1+\mu)}, 
\end{align*} 
and so we conclude that
\begin{align*}
\Vert \mathscr{L}_n^K\Vert_{p,\mu}^p &\le \left(1+\frac{1}{(n+1)(1+\mu)}\right)\sum_{k=0}^n \int\limits_{\frac{k}{n+1}}^{\frac{k+1}{n+1}}|f_\mu(t)|^p\,dt\\[0.5em]
&=\left(1+\frac{1}{(n+1)(1+\mu)}\right) \int\limits_{0}^{1}|f_\mu(t)|^p\,dt\\[0.5em]
&=\left(1+\frac{1}{(n+1)(1+\mu)}\right)\Vert f\Vert_{p,\mu}^p,
\end{align*}
for all $n\ge 1$, whence
\begin{align*}
\|\mathscr{L}_n^K f\|_{p,\mu}^p \le \Bigl(1+\frac{1}{1+\mu}\Bigr)\,\|f\|_{p,\mu}^p=:K_\mu\,\|f\|_{p,\mu}^p.
\end{align*} 
This concludes the first part of the proof. Let now $\varepsilon > 0$ be fixed. By the density of $C([0,1])$ in $L^p([0,1])$, there exists $h \in C([0,1])$ such that
\begin{align*}
\|f - h\|_p < \ln(1+\mu)\frac{\varepsilon}{2(K_\mu {^{1/p}} + 1)}.
\end{align*}
As a consequence,
\begin{align*}
    \Vert f -h \Vert_{p,\mu} &= \left\{\int\limits_0^1\left(\frac{|f(t)-h(t)|}{\ln_\mu(t)}\right)^pdt\right\}^{\frac{1}{p}\notag}\\[0.3em]
    &\le\frac{1}{\ln(1+\mu)} \left\{\int\limits_0^1|f(t)-h(t)|^pdt\right\}^{\frac{1}{p}}\\[0.5em]
    &=\frac{\|f-h\|_p}{\ln(1+\mu)}<\frac{\varepsilon}{2(K_\mu   ^{1/p}+1)}.
\end{align*}
Now, 
\begin{align*}
\|\mathscr{L}_n^K f - f\|_{p,\mu} 
&= \|\mathscr{L}_n^K f - \mathscr{L}_n^K h + \mathscr{L}_n^K h - h + h - f\|_{p,\mu} \\[0.5em]
&\le \|\mathscr{L}_n^K(f-h)\|_{p,\mu}+ \|\mathscr{L}_n^K h - h\|_{p,\mu}+ \|h - f\|_{p,\mu}\\[0.5em]
&=:J_1+J_2+J_3.
\end{align*}
By (\ref{Kmu}), we immediately have:
\begin{align*}
J_1 = \|\mathscr{L}_n^K(f-h)\|_{p,\mu} \le K_\mu^{1/p}\, \|f-h\|_{p,\mu}.
\end{align*}
For the second term $J_2$, since $h \in C([0,1])$, we can use Theorem \ref{ConvCostr} to obtain 
\begin{align*}
J_2=\|\mathscr{L}_n^K h - h\|_{p,\mu} \le \left\{ \int\limits_0^1 \left| \mathscr{L}_n^kh(x)-h(x)\right|^p\right\}^\frac{1}{p}\le \|\mathscr{L}_n^Kh-h\|_\infty< \frac{\varepsilon}{2}, 
\end{align*} for $n$ sufficiently large. Hence, 
\begin{align*}
\|\mathscr{L}_n^K f - f\|_{p,\mu} &\le (K_\mu^{1/p}+1)\|f-h\|_{p,\mu}+\frac{\varepsilon}{2} \\[0.5em]&< (K_\mu^{1/p}+1)\frac{\varepsilon}{2(K_\mu^{1/p}+1)}+\frac{\varepsilon}{2}=\varepsilon,
\end{align*} for $n$ sufficiently large. This completes the proof.\\[0.2em]
\end{proof} 
We point out that
\begin{align}\label{stima_normap}
    \frac{1}{\ln(2+\mu)}\|f\|_p \le \|f\|_{p,\mu}\le \frac{1}{\ln(1+\mu)}\|f\|_p,
\end{align} and therefore Theorem~\ref{th_Kmu} can equivalently be formulated in terms of the usual $L^p$-norm $\|\cdot\|_p$, multiplied by an explicit constant depending only on $\mu$, instead of the weighted norm $\|\cdot\|_{p,\mu}$, for every $1\le p<+\infty$.

\section{Voronovskaja-type formula}
In this section, we establish a Voronovskaja-type asymptotic formula for the operators $\mathscr{L}_n^K$. 

\begin{theorem}\label{Vor}
If $f \in C^2([0,1])$, then
\begin{align*} \displaystyle 
\lim_{n \to +\infty} n\left[ \mathscr{L}^K_n(f,x) - f(x) \right] 
=& \ln_\mu(x)\left[f_\mu'(x)\left(\frac{1}{2}+\frac{x-x^2}{2(1+\mu)}-x\right)\right.\\[0.5em]&\left.+f_\mu''(x)\left(\frac{x-x^2}{2}\right)\right],
\end{align*}  for $x\in[0,1]$.
\end{theorem}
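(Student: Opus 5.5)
The idea is to reduce the statement to a Voronovskaja formula for a Kantorovich–Bernstein-type operator acting on $f_\mu$. By definition,
\[
\mathscr{L}^K_n(f,x)-f(x)=\ln_\mu(x)\Big[\sum_{k=0}^n p_{n,k}(a_{n+1}(x))\,(n+1)\int\limits_{\frac{k}{n+1}}^{\frac{k+1}{n+1}} f_\mu(t)\,dt-f_\mu(x)\Big].
\]
Since $\ln_\mu$ is bounded and smooth, and $f\in C^2([0,1])$ implies $f_\mu\in C^2([0,1])$, it suffices to compute the limit of $n$ times the bracket. Write $g:=f_\mu$ and $y_n:=a_{n+1}(x)$.

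First expand $g$ by Taylor's formula with Peano remainder at $x$:
\[
g(t)=g(x)+g'(x)(t-x)+\tfrac12 g''(x)(t-x)^2+h(t-x)(t-x)^2,
\]
with $h$ bounded and $h(u)\to0$ as $u\to0$. Inserting this into the bracket, the constant term cancels because the Bernstein weights sum to $1$ and $(n+1)\int dt=1$. We are left with the first and second central moments
\[
M_j(n,x):=\sum_{k}p_{n,k}(y_n)\,(n+1)\int\limits_{\frac{k}{n+1}}^{\frac{k+1}{n+1}}(t-x)^j\,dt,\qquad j=1,2,
\]
plus the remainder term. The moments are computed exactly using the Bernstein moments at $y_n$, namely $\sum_k k\,p_{n,k}(y)=ny$ and $\sum_k k^2 p_{n,k}(y)=n^2y^2+ny(1-y)$, together with the cell integrals $(n+1)\int t\,dt=\frac{2k+1}{2(n+1)}$ and $(n+1)\int t^2\,dt=\frac{3k^2+3k+1}{3(n+1)^2}$. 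This gives
\[
M_1=\frac{ny_n+\frac12}{n+1}-x=\frac{n(y_n-x)+\frac12-x}{n+1}.
\]
By Lemma \ref{lemma_n(an+1-x)}, $nM_1\to \frac{x-x^2}{2(1+\mu)}+\frac12-x$, which is exactly the coefficient of $g'$ in the statement. For $M_2$, write $t-x=(t-y_n)+(y_n-x)$. The Kantorovich second moment about $y_n$ is $\frac{y_n(1-y_n)}{n}+O(n^{-2})$. The cross term is $2(y_n-x)\cdot O(1/n)=O(n^{-2})$, and $(y_n-x)^2=O(n^{-2})$ by Lemma \ref{lemma_n(an+1-x)}. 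Hence $nM_2\to x(1-x)$ by Lemma \ref{lemma_an+1}, and $\frac12 g''(x)\,nM_2\to g''(x)\frac{x-x^2}{2}$, matching the statement.

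The main obstacle is showing that the remainder $n\sum_k p_{n,k}(y_n)(n+1)\int h(t-x)(t-x)^2\,dt\to0$. Given $\varepsilon$, choose $\delta$ with $|h(u)|<\varepsilon$ for $|u|<\delta$. The part of the sum with $|t-x|<\delta$ is at most $\varepsilon\, nM_2$, which is $O(\varepsilon)$. The part with $|t-x|\ge\delta$ is bounded by $\|h\|_\infty\,\delta^{-2}\,n M_4$, where $M_4$ is the fourth central moment. So we need $M_4=O(n^{-2})$. This follows from the analogous expansion about $y_n$: the Bernstein fourth central moment is $O(n^{-2})$, the shift to the Kantorovich cells costs $O(n^{-4})$ plus mixed terms of order $O(n^{-2})$, and $|y_n-x|=O(1/n)$. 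Expanding $(t-x)^4$ binomially around $y_n$ and using Hölder-type bounds on the mixed terms keeps everything at $O(n^{-2})$. Since the convergence of the moments and of Lemma \ref{lemma_n(an+1-x)} is pointwise in $x$, the limit holds for each fixed $x\in[0,1]$. Multiplying back by $\ln_\mu(x)$ yields the formula.
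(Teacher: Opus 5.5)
Your proof is correct and follows the same route as the paper: a Taylor expansion of $f_\mu$ with Peano remainder, exact computation of the first and second Kantorovich moments from the Bernstein moments at $a_{n+1}(x)$, and Lemma~\ref{lemma_n(an+1-x)} for the drift term. The only real variation is in the remainder, where you bound the far part by $\|h\|_\infty\delta^{-2}\,nM_4$ with $M_4=O(n^{-2})$; the paper instead splits over $|k/(n+1)-a_{n+1}(x)|>\widetilde{\delta}/2$ and cites the tail bound $\sum p_{n,k}(a_{n+1}(x))\le C(n+1)^{-2}$ on that set, so your version is slightly more self-contained.
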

\begin{proof}
First, we note that, if $f \in C^2([0,1])$, then $f_\mu \in C^2([0,1])$ as well. We now fix a point \( x \in [0,1] \) and apply the second-order Taylor formula with Peano remainder to \( f_\mu \). In particular, we obtain
\[
f_\mu\left( t \right) = f_\mu(x) + f_\mu'(x) \left( t - x \right) + \left( t - x \right)^2 \left[\frac{1}{2} f_\mu''(x) + \psi\left( t - x \right)\right],
\]
where \( \psi(y) \) is a bounded function for every \( y\) that tends to zero as \( y \to 0^+ \). Replacing this expansion into the operator, we get
\begin{align*}
\mathscr{L}^K_n(f,x) &=\,\ln_{\mu}(x) \sum_{k=0}^n p_{n,k}(a_{n+1}(x))\,(n+1)\int\limits_{\frac{k}{n+1}}^{\frac{k+1}{n+1}} \Bigg(f_\mu(x) + f_\mu'(x) \left( t - x \right)\\[0.5em]
&\hspace{5mm}+\left( t - x \right)^2 \left[\frac{1}{2} f_\mu''(x) + \psi\left( t - x \right)\right]\Bigg)\,dt\\[0.5em]
&=\,\ln_\mu(x) f_\mu(x) \sum_{k=0}^n p_{n,k}(a_{n+1}(x))\\
&\hspace{5mm}+ \ln_\mu(x)f_\mu'(x) \sum_{k=0}^n p_{n,k}(a_{n+1}(x))(n+1)\int\limits_{\frac{k}{n+1}}^{\frac{k+1}{n+1}}(t-x)\,dt \\[0.5em]
&\hspace{5mm} +\ln_\mu(x) \frac{1}{2} f_\mu''(x) \sum_{k=0}^n p_{n,k}(a_{n+1}(x))(n+1)\int\limits_{\frac{k}{n+1}}^{\frac{k+1}{n+1}}(t-x)^2 dt\\[0.5em]
&\hspace{5mm}+\ln_\mu(x)\sum_{k=0}^n p_{n,k}(a_{n+1}(x))(n+1)\int\limits_{\frac{k}{n+1}}^{\frac{k+1}{n+1}}(t-x)^2\, \psi(t-x) dt.
\end{align*}
Now, taking into account that $\sum\limits_{k=0}^n p_{n,k}(a_{n+1}(x))=1$, the first term is equal to $\ln_\mu(x)f_\mu(x)=f(x)$. Denoting the last term, i.e., the remainder, by \(\ln_\mu(x)R_n(x)\) for brevity, the operator becomes 
\begin{align*}
\mathscr{L}^K_n(f,x)=&f(x)+\ln_\mu(x)\Bigg[ f_\mu'(x) \sum_{k=0}^n p_{n,k}(a_{n+1}(x))(n+1)\int\limits_{\frac{k}{n+1}}^{\frac{k+1}{n+1}}(t-x)dt\end{align*}\begin{align*}
\hspace{25mm} +\frac{1}{2} f_\mu''(x) \sum_{k=0}^n p_{n,k}(a_{n+1}(x))(n+1)\int\limits_{\frac{k}{n+1}}^{\frac{k+1}{n+1}}(t-x)^2dt + R_n(x)\Bigg],
\end{align*}
so that 
\begin{align*}
\mathscr{L}^K_n(f,x)-f(x)&=\ln_\mu(x)\Bigg[ f_\mu'(x) \sum_{k=0}^n p_{n,k}(a_{n+1}(x))(n+1)\int\limits_{\frac{k}{n+1}}^{\frac{k+1}{n+1}}(t-x)dt\\[0.5em]
& \quad +\frac{1}{2} f_\mu''(x) \sum_{k=0}^n p_{n,k}(a_{n+1}(x))(n+1)\int\limits_{\frac{k}{n+1}}^{\frac{k+1}{n+1}}(t-x)^2dt+ R_n(x)\Bigg] \\[0.5em] &=:\ln_\mu(x) \Big[I_1+I_2+R_n(x)\Big].
\end{align*}
We now analyze the terms appearing in the above expression. First, we consider  
\begin{align*}
I_1&=f_\mu'(x) \sum_{k=0}^n p_{n,k}(a_{n+1}(x))(n+1)\left(\frac{k+1/2}{(n+1)^2}-\frac{x}{n+1}\right)\\[0.5em] 
&=f_\mu'(x) \sum_{k=0}^n p_{n,k}(a_{n+1}(x))\left(\frac{k}{n+1}+\frac{1}{2(n+1)}-x\right)\\[0.5em]
&=f_\mu'(x)\Bigg[\frac{1}{2(n+1)}-x+\sum_{k=0}^n \frac{k}{(n+1)}p_{n,k}(a_{n+1}(x))\Bigg],
\end{align*} where
\begin{align*}
    \sum_{k=0}^n \frac{k}{(n+1)}\,p_{n,k}(a_{n+1}(x))=\frac{n}{n+1}\sum_{k=0}^n \frac{k}{n}\,p_{n,k}(a_{n+1}(x))=\frac{n}{n+1}\,a_{n+1}(x),
\end{align*}
using the known results about King-type operators (see \cite{king} or equation (12) of \cite{Angeloni2025}), so that,
\begin{align*}
I_1&=f_\mu'(x)\Bigg[\frac{1+2\,n\,a_{n+1}(x)}{2(n+1)}-x\Bigg]= \frac{f_\mu'(x)}{(n+1)}\Bigg({\frac{1}{2}+n(a_{n+1}(x)-x)-x}\Bigg).
\end{align*}
As a consequence,
\begin{align*}
    \lim_{n\to +\infty}nI_1={f_\mu'(x)}\Bigg({\frac{1}{2}+\frac{x-x^2}{2(1+\mu)}-x}\Bigg),
\end{align*}
by Lemma \ref{lemma_n(an+1-x)}. Next, computing the integral in $I_2$, we obtain
\begin{align*}
I_2&=\frac{1}{2} f_\mu''(x)\sum_{k=0}^n p_{n,k}(a_{n+1}(x))\Bigg[\frac13 \cdot \frac{3k^2+3k+1}{(n+1)^2}-\frac{2kx+x}{n+1}+x^2\Bigg]\\[0.5em]
&=\frac{1}{2} f_\mu''(x)\Bigg[\frac{1}{3(n+1)^2}+\sum_{k=0}^n \left(\frac{k}{n+1}-x\right)^2p_{n,k}(a_{n+1}(x))\\[0.5em]
&\hspace{5mm}{+\sum_{k=0}^n \frac{k}{(n+1)^2}\,\,p_{n,k}(a_{n+1}(x))-\frac{x}{n+1}}\Bigg],
\end{align*} 
In particular, using again the known results about King-type operators (see \cite{king} or equations (11), (12) and (13) of \cite{Angeloni2025}), we have
\begin{align*}
\sum_{k=0}^n &\left( \frac{k}{n+1} - x \right)^2 p_{n,k}(a_{n+1}(x)) 
\\[1mm]&= \sum_{k=0}^n\frac{k^2}{(n+1)^2}\,\, p_{n,k}(a_{n+1}(x)) -2 x \sum_{k=0}^n \frac{k}{n+1}\,\,p_{n,k}(a_{n+1}(x)) +x^2\\[1mm]
&=\frac{n\,a_{n+1}(x)}{(n+1)^2} + \frac{n(n-1)}{(n+1)^2} a_{n+1}^2(x) - 2x\frac{n}{n+1}a_{n+1}(x) + x^2\\[0.5em]
&=\frac{n^2a_{n+1}^2(x)}{(n+1)^2}- 2x\frac{n}{n+1}a_{n+1}(x) + x^2+\frac{n\,a_{n+1}(x)(1-a_{n+1}(x))}{(n+1)^2}\\[0.5em]
&=\left(\frac{n\,a_{n+1}(x)}{n+1}-x\right)^2+ \frac{n\,a_{n+1}(x)(1-a_{n+1}(x))}{(n+1)^2}.
\end{align*}
So, we obtain that
\begin{align*}
I_2&=\frac{f_\mu''(x)}{2} \Bigg[\frac{1}{3(n+1)^2}-\frac{x}{n+1}+\left(\frac{n\,a_{n+1}(x)}{n+1}-x\right)^2\\[0.5em]&\hspace{5mm}+ \frac{n\,a_{n+1}(x)(2-a_{n+1}(x))}{(n+1)^2}\Bigg]=:\frac{f_\mu''(x)}{2}Q_n(x).
\end{align*}
Now, we study the asymptotic behaviour of $n I_2=\dfrac{f_\mu''(x)}{2}\,n\,Q_n(x)$, as $n \to +\infty$. Analyzing the terms separately and using Lemma \ref{lemma_n(an+1-x)}, we have that
\begin{align*}
&\hspace{-55mm}\lim_{n\to +\infty}\frac{n}{3(n+1)^2}=0,\\[0.5em]
&\hspace{-55mm}\lim_{n\to +\infty}-x\Big(\frac{n}{n+1}\Big)=-x,
\end{align*}\begin{align*}
\lim_{n\to +\infty}n\left(\frac{n a_{n+1}(x)}{n+1}-x\right)^2
\hspace{0.5mm}&=\lim_{n\to +\infty}n\,\left(\frac{n(a_{n+1}(x)-x)}{n+1}-\frac{x}{n+1}\right)^2\\[0.5em]
&=\lim_{n\to +\infty}\frac{n}{(n+1)^2}\Big(n(a_{n+1}(x)-x)-x\Big)^2\\[0.5em]
&=\lim_{n\to +\infty}\frac{n}{(n+1)^2}\lim_{n\to +\infty}\Big(n(a_{n+1}(x)-x)-x\Big)^2\\[0.5em]
&=\left(\frac{x-x^2}{2(1+\mu)}-x \right)^2\lim_{n\to +\infty}\frac{n}{(n+1)^2}=0,
\end{align*} and
\begin{align*}
\lim_{n\to +\infty}\frac{n^2}{(n+1)^2}\,a_{n+1}(x)(2-a_{n+1}(x))=x(2-x),
\end{align*}
since $a_{n+1}(x)\to x$ as $n\to+\infty$. Then, we obtain that
\begin{align}\label{nQn}
    \lim_{n\to{+}\infty} n\,{Q_n}(x)=-x + x(2-x)=x-x^2,
\end{align}
and so,
\begin{align*}
\lim_{n\to{+}\infty} n\,I_2 = f_\mu''(x)\frac{x-x^2}{2}.
\end{align*}
Now, we consider the last term \(R_n{(x)}\), given by
\begin{align*}
    R_n(x) = \sum_{k=0}^n p_{n,k}(a_{n+1}(x))(n+1)\int\limits_{\frac{k}{n+1}}^\frac{k+1}{n+1} (t - x)^2 \psi(t - x) dt.
\end{align*}
Since the function \(\psi(y)\) is bounded and tends to zero as $y\to 0^+$, fixed \(\varepsilon > 0\), there exists \(\widetilde{\delta} > 0\) such that, for every \(y\) with \(0<y \le \widetilde{\delta}\), then \(|\psi(y)| < \varepsilon\). As a consequence, we can write
\begin{align*}
    |R_n(x)| &\le \sum_{k=0}^n p_{n,k}(a_{n+1}(x))(n+1)\int\limits_{\frac{k}{n+1}}^\frac{k+1}{n+1} (t - x)^2 \,|\psi(t - x)| 
    dt\\[2mm]
    &\le \Bigg\{ \sum_{\left|\frac{k}{n+1} - a_{n+1}(x)\right| \le \frac{\widetilde{\delta}}{2}} + \sum_{\left|\frac{k}{n+1} - a_{n+1}(x)\right| > \frac{\widetilde{\delta}}{2}} \Bigg\} \,p_{n,k}(a_{n+1}(x)) \\[2mm]&\qquad \cdot\,(n+1)\int\limits_{\frac{k}{n+1}}^\frac{k+1}{n+1} (t - x)^2 \,|\psi(t - x)| dt \, =: R_1 + R_2.
\end{align*}
To estimate \(R_1\) we observe that, for \(n\) sufficiently large, 
\begin{align*}
|t-x|&\le\left| t-\frac{k}{n+1}\right|+\left| \frac{k}{n+1} - a_{n+1}(x) \right| + \left| a_{n+1}(x) - x \right| \\[0.5em]
&\le\frac{1}{n+1}+\left| \frac{k}{n+1} - a_{n+1}(x) \right| + \left| a_{n+1}(x) - x \right| \leq \widetilde{\delta},
\end{align*}
and so \(\left| \psi\left( t - x \right) \right| < \varepsilon\). Therefore, using the result \eqref{nQn}, there exists a constant \(D > 0\) such that, for sufficiently large $n\in\mathbb{N},$
\begin{align*}
    nR_1 \leq \varepsilon \cdot n \sum_{k=0}^n p_{n,k}(a_{n+1}(x))(n+1)\int\limits_{\frac{k}{n+1}}^{\frac{k+1}{n+1}}(t-x)^2 dt =\varepsilon\cdot n\,{Q_n}(x)\le \varepsilon \, D.
\end{align*}
To estimate \(R_2\), we proceed as follows:
\begin{align*}
    R_2 &\le \|\psi\|_\infty \sum_{\left|\frac{k}{n+1} - a_{n+1}(x)\right| > \frac{\widetilde{\delta}}{2}} 
    p_{n,k}(a_{n+1}(x))(n+1)\int\limits_{\frac{k}{n+1}}^{\frac{k+1}{n+1}} (t-x)^2 dt \\[0.5em]
    &\le \|\psi\|_\infty\hspace{-3mm} \sum_{\left|\frac{k}{n+1} - a_{n+1}(x)\right| > \frac{\widetilde{\delta}}{2}} \hspace{-3mm}
    p_{n,k}(a_{n+1}(x))(n+1)\int\limits_{\frac{k}{n+1}}^{\frac{k+1}{n+1}} \left(t-\frac{k}{n+1}+\frac{k}{n+1}-x\right)^2 dt\\[0.5em]
    & \le \|\psi\|_\infty \sum_{\left|\frac{k}{n+1} - a_{n+1}(x)\right| > \frac{\widetilde{\delta}}{2}} 
    p_{n,k}(a_{n+1}(x))(n+1)\int\limits_{\frac{k}{n+1}}^{\frac{k+1}{n+1}} \Bigg[2\left(t-\frac{k}{n+1}\right)^2\\[0.5em]
    &\hspace{5mm}+2\left(\frac{k}{n+1}-x\right)^2\Bigg] dt\\[-0.5em]
    & \le \|\psi\|_\infty \sum_{\left|\frac{k}{n+1} - a_{n+1}(x)\right| > \frac{\widetilde{\delta}}{2}} 
    p_{n,k}(a_{n+1}(x))(n+1)\int\limits_{\frac{k}{n+1}}^{\frac{k+1}{n+1}} \Bigg[ 2\left(t-\frac{k}{n+1}\right)^2\\[0.5em]
    &\hspace{5mm}+4\left(\frac{k}{n+1} - a_{n+1}(x)\right)^2+ {4}\big(a_{n+1}(x)-x\big)^2 \Bigg] dt\\[0.5em]
    &=\, \|\psi\|_\infty\, \sum_{\left|\frac{k}{n+1} - a_{n+1}(x)\right| > \frac{\widetilde{\delta}}{2}}  
    p_{n,k}(a_{n+1}(x))
    \left[ \frac{2}{{3}(n+1)^2}+4\left(\frac{k}{n+1} - a_{n+1}(x)\right)^2\right.  \\[0.5em]
    &\hspace{5mm}+ {4}\big(a_{n+1}(x)-x\big)^2\Bigg]\end{align*}\begin{align*}
    &= 2\|\psi\|_\infty \Bigg\{ \frac{1}{3(n+1)^2}+{2}(a_{n+1}(x)-x)^2+{2}\sum_{\left|\frac{k}{n+1} - a_{n+1}(x)\right| > \frac{\widetilde{\delta}}{2}}p_{n,k}(a_{n+1}(x))\\[0.5em]
    &\hspace{5mm}\cdot\left(\frac{k}{n+1} - a_{n+1}(x)\right)^2 \Bigg\},
\end{align*} where in the above computations we used twice the inequality $(a+b)^2\le2a^2+2b^2$, for every $a,b \in \mathbb{R}$. As a consequence, we have
\begin{align*}
nR_2 \le &\; 2\|\psi\|_\infty \Bigg\{ \frac{n}{{3}(n+1)^2}+{2}n(a_{n+1}(x)-x)^2+2n
\\[0.2em]&\cdot\sum_{\left|\frac{k}{n+1} - a_{n+1}(x)\right| > \frac{\widetilde{\delta}}{2}} \,p_{n,k}(a_{n+1}(x))\left(\frac{k}{n+1} - a_{n+1}(x)\right)^2 \Bigg\},
\end{align*}
First, we note that the first term in the right-hand side of the above inequality tends to 0, as $n \to +\infty$. 
The estimate of the second term is a consequence of Lemma \ref{lemma_an+1} and Lemma \ref{lemma_n(an+1-x)}, from which $n(a_{n+1}(x)-x)^2$ tends to $0$, as $n \to +\infty.$ 
Finally, to estimate the last term in the right-hand side of the above inequality, we use known estimates of 
$p_{n,k}$ (see \cite{BerType, Angeloni2025}). Indeed, for each \( \widetilde{\delta} > 0 \), there exists a constant $C=C({\widetilde{\delta}})$, such that
\[
\sum_{ \left| \frac{k}{n+1} - a_{n+1}(x) \right| > \frac{\widetilde{\delta}}{2} } p_{n,k}(a_{n+1}(x)) \leq C\, {(n+1)}^{-2}.
\] 
Since \( \left( \frac{k}{n+1} - a_{n+1}(x) \right)^2 \le {2+2\,||a_{n+1}||_\infty^2} \) for all \(0\le\, k \le n \), it follows that
\begin{align*}
&4\| \psi \|_\infty \, n \sum_{ \left| \frac{k}{n+1} - a_{n+1}(x) \right| > \frac{\widetilde{\delta}}{2}} \left( \frac{k}{n+1} - a_{n+1}(x) \right)^2 p_{n,k}(a_{n+1}(x)) \\[0.4em]
\hspace{5mm}& \le 4\| \psi \|_\infty\, n \,\,(2+2\,||a_{n+1}||_\infty^2)\sum_{ \left| \frac{k}{n+1} - a_n(x) \right| > \frac{\widetilde{\delta}}{2}} p_{n,k}(a_{n+1}(x)) \\[0.5em]
&\le 8(1+||a_{n+1}||_\infty^2)\|\psi\|_\infty C\frac{n}{(n+1)^2},
\end{align*}
which again tends to $0$, as $n \to +\infty$. Therefore, for \(n\) sufficiently large, we have that 
$nR_2\le \varepsilon\,A$, for some $A>0$, and consequently
\begin{align*}
    n|R_n(x)|\le \varepsilon(D+A).
\end{align*}
By the results obtained for each term, we finally derive
\begin{align*}
\lim_{n \to +\infty} n\left[ \mathscr{L}_n^K(f,x) - f(x) \right] 
&= \ln_\mu(x)\left[f_\mu'(x)\left(\frac{1}{2}+\frac{x-x^2}{2(1+\mu)}-x\right)\right.\\[0.5em]&\left.\hspace{5mm}+f_\mu''(x)\left(\frac{x-x^2}{2}\right)\right].
\end{align*} This completes the proof.
\\\end{proof}

\section{Saturation results and inverse theorems}
Using the Voronovskaja-type formula given in Theorem \ref{Vor}, we find the second-order differential operator for the Kantorovich operator defined by
\begin{align} \label{D(f)}
{\cal D}(f)(x) := \ln_\mu(x)\left[f_\mu'(x)\left(\frac{1}{2}+\frac{x-x^2}{2(1+\mu)}-x\right)+f_\mu''(x)\left(\frac{x-x^2}{2}\right)\right], 
\end{align}
where $x \in [0,1],$ \(f:[0,1]\to \mathbb{R}\) and \(f_\mu = f/\ln_\mu\) as usual. It is easy to check that it is possible to rewrite the above differential operator in the following standard form
\begin{align} \label{FormaD}
{\cal D}(f) = \frac{1}{w_2}\left( \frac{1}{w_1}\left( \frac{f(x)}{w_0} \right)'\right)',
\end{align}
where
$$
w_0(x)\, :=\, \ln_\mu(x), \quad \quad \quad w_1(x)\, :=\, \frac{1}{x-x^2}\,e^{-x/(\mu+1)}, 
$$
$$
w_2(x)\, :=\, \frac{2}{\ln_\mu(x)}\, e^{x/(\mu+1)}, \quad \quad 0<x<1.
$$
Indeed, if we set:
\[
A(x) := \frac12 + \frac{x-x^2}{2(1+\mu)} - x,\quad B(x):=\frac{x-x^2}{2},
\]
then 
\begin{align*}
{\cal D}(f)(x) &=\ln_\mu(x)\left[A(x)f'_\mu(x) + B(x)f''_\mu(x)\right]\\
&=\frac{1}{w_1 w_2}f_\mu''(x)-\frac{w_1'}{w_2 w_1^2}f_\mu'(x),
\end{align*}with $f_\mu(x)=\frac{f}{\ln_\mu}=\frac{f}{w_0}$.
From this, 
\begin{align*}
  w_1 w_2=\frac{2}{\ln_\mu(x)(x-x^2)}  \quad &\Rightarrow \quad w_2=\frac{2}{\ln_\mu(x)(x-x^2)}\frac{1}{w_1}\\[0.5em]
  -\frac{w_1'}{w_2 w_1^2}=\ln_\mu(x)A(x)\quad &\Rightarrow \quad \frac{w_1'}{w_1}=-A(x)\left(\frac{2}{x-x^2}\right)\\&\hspace{15mm}=-\frac{1}{x-x^2}- \frac{1}{1+\mu} + \frac{2}{1-x}.
\end{align*}
Integrating this last expression, we obtain
\begin{align*}
\ln(w_1(x) )&= -\ln|x|+\ln|1-x|-\frac{x}{1+\mu} 
-2\ln|1-x|\\[0.5em]
&= -\Big[\ln|x|+\ln|1-x|\Big]-\frac{x}{1+\mu}.
\end{align*}
Therefore,
\begin{align*}
    w_1(x) = \frac{1}{x-x^2}\,e^{-x/(1+\mu)}, \quad w_2(x) 
= \frac{2}{\ln_\mu(x)}e^{x/(1+\mu)}.
\end{align*}
These functions \(w_i\), with \(i=0,1,2\), are strictly positive and sufficiently regular in $(0,1)$, and they form an extended complete Tchebychev system and a fundamental set of solutions of the homogeneous equation $D(f)=0$ (see \cite{KAST1966}).

Following the so-called “parabola technique” originally proposed by Bajsanski and Bojanic in \cite{BABO1964}, Garrancho and C\'ardenas-Morales in \cite{GACA2010} obtained a useful lemma related to differential operators expressed in the form~(\ref{FormaD}). We report the lemma below.

\begin{lemma} \label{Lemma-GCM}
Let $J$ be any fixed open interval of $[0,1]$. Let $g,h \in C(J)$ and $t_0 < t_1 < t_2$ be points in $J$ such that 
$g(t_1)=g(t_2)=0$ and $g(t_0)>0$. Then, there exist $\alpha<0$, a function $z$ which is a classical solution of the differential equation 
${\cal D}(f)=0$ on $J$, and a point $x\in(t_1,t_2)$ such that, for all $t\in[t_1,t_2]$,
\[
\alpha\, h(t) + z(t) \,\ge\, g(t), 
\]
and at the point $x$,
\[
\alpha\, h(x) + z(x) = g(x).
\]
\end{lemma}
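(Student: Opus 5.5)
The plan is the Bajsanski--Bojanic parabola technique as adapted by Garrancho and C\'ardenas-Morales in \cite{GACA2010}, and it rests on two assumptions about how the lemma is read. First, $h$ is the auxiliary ``parabola'' that the lemma is applied with, i.e. a function that is strictly convex with respect to the Tchebychev system $\{w_0,\,w_0\int w_1\}$; for instance ${\cal D}(h)>0$ on $J$. Second, $t_0$ lies between $t_1$ and $t_2$, which is where positivity of $g$ is actually used (as literally written, $t_0<t_1$, and I do not see how to use $g(t_0)>0$ there). Without some such convexity, no sign $\alpha<0$ can be forced for an arbitrary continuous $h$. So the first step is to make precise the property of $h$ being used.

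I would first record that the kernel of ${\cal D}$ is two-dimensional. By \eqref{FormaD} it is spanned by $u_1:=w_0$ and $u_2:=w_0\int^x w_1$. Since $w_0,w_1>0$ on $(0,1)$, $\{u_1,u_2\}$ is an extended complete Tchebychev system \cite{KAST1966}. Hence for any $t_1<t_2$ in $J$ and any data there is a unique $z$ with ${\cal D}(z)=0$ interpolating that data at $t_1,t_2$. Denote by $P\varphi$ this interpolant of a function $\varphi$. Convexity of $h$ in this system means that
\[
q:=Ph-h>0 \quad \text{on } (t_1,t_2).
\]
This can be checked by writing $h/w_0$ and using the generalized Rolle theorem: $(h/w_0)'/w_1$ is strictly increasing when ${\cal D}(h)>0$. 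Since $g(t_1)=g(t_2)=0$, we also have $Pg=0$.

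Next, for $\alpha<0$ I set $z_\alpha:=-\alpha\,Ph$, which solves ${\cal D}(z_\alpha)=0$. Then
\[
F_\alpha:=\alpha h+z_\alpha-g=-\alpha\,q-g,
\]
and $F_\alpha$ vanishes at $t_1,t_2$. The requirement $F_\alpha\ge0$ on $[t_1,t_2]$ becomes $-\alpha\ge g/q$ on $(t_1,t_2)$. So I take
\[
-\alpha:=M:=\sup_{(t_1,t_2)} g/q .
\]
Then $M\ge g(t_0)/q(t_0)>0$, so $\alpha=-M<0$, and $F_\alpha\ge0$ on $[t_1,t_2]$. Finally, $x$ will be a point where the supremum is attained, so that $F_\alpha(x)=0$, i.e. $\alpha h(x)+z(x)=g(x)$ with $z=z_\alpha$.

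The main obstacle is to show that $M<\infty$ and that the supremum is attained in the open interval $(t_1,t_2)$, since both $g$ and $q$ vanish at the endpoints. I would first try to avoid the issue by slightly shrinking the interval. Alternatively, following \cite{GACA2010}, one can run a continuity argument in $\alpha$: consider $m(\alpha):=\min_{[t_1,t_2]}F_\alpha$. Because $q>0$ inside, $m(\alpha)$ becomes $\ge0$ for $-\alpha$ large on compact subintervals, while $F_\alpha(t_0)<0$ for $-\alpha$ small. Taking the infimal admissible $-\alpha$ then produces an interior contact point. If $g$ is not differentiable at $t_1,t_2$, I would instead exploit that $q$ has nonzero one-sided derivatives there: $q'(t_1)\neq0\neq q'(t_2)$ follows from the Tchebychev property, since $q$ has only simple zeros. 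I would then argue that the contact point cannot escape to the boundary, because there $-\alpha q$ dominates $g$ once $\alpha$ is fixed at the critical value.
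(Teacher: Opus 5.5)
The paper does not prove this lemma; it quotes it from \cite{GACA2010}, so your argument has to stand on its own.

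\textbf{Your two readings.} You are right that $t_0$ must lie in $(t_1,t_2)$, and this is necessary, not just convenient. As literally stated the lemma fails. Take $g\equiv 0$ on $[t_1,t_2]$ and $h$ with ${\cal D}(h)>0$. Then $(\alpha h+z)/w_0$, viewed as a function of $W_1=\int w_1$, is strictly concave. So it cannot be $\ge 0$ on $[t_1,t_2]$ and vanish at an interior point.

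Your first assumption, however, is unnecessary. Your claim that $\alpha<0$ cannot be forced for an arbitrary continuous $h$ is false. Convexity of $h$ is needed only later, when the lemma is applied in the saturation argument.

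\textbf{The gap.} The real problem is in your construction. By taking $z=-\alpha Ph$ you force $\alpha h+z$ to meet $g$ exactly at $t_1$ and $t_2$. Admissibility of $\alpha$ then means $-\alpha\ge\sup_{(t_1,t_2)} g/q$. For a merely continuous $g$ this supremum can be $+\infty$.

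For example, take $g(t)=\sqrt{(t-t_1)(t_2-t)}$ and let $h$ be smooth with ${\cal D}(h)>0$. Then $q$ vanishes linearly at $t_1$, as you note yourself. Hence $F_\alpha=-\alpha q-g<0$ near $t_1$ for every $\alpha<0$. Neither the supremum nor the continuity argument in $\alpha$ yields anything. Your remark that ``$-\alpha q$ dominates $g$ at the boundary'' is precisely what fails here. Shrinking the interval does not help either, since the inequality is required on all of $[t_1,t_2]$.

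\textbf{The missing idea.} Do not pin the contact points at $t_1,t_2$. Instead, use the kernel direction $w_0$ to lift.

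Fix $\varepsilon>0$ so small that $g(t_0)-\varepsilon q(t_0)>0$, and set $G:=(g-\varepsilon q)/w_0$. Then $G$ is continuous on $[t_1,t_2]$, vanishes at $t_1$ and $t_2$, and is positive at $t_0$. Hence $m:=\max_{[t_1,t_2]}G>0$ is attained at some $x\in(t_1,t_2)$.

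Put $\alpha:=-\varepsilon$ and $z:=\varepsilon Ph+m\,w_0$, which lies in $\ker{\cal D}$. Then
\[
\alpha h+z-g \;=\; \varepsilon q-g+m\,w_0 \;=\; w_0\,(m-G)\;\ge\;0 \quad\text{on } [t_1,t_2],
\]
with equality at $x$. This is the original Bajsanski--Bojanic device. It needs no convexity of $h$ and no regularity of $g$ at $t_1,t_2$.
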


We now provide a saturation theorem for the operators $\mathscr{L}_n^K$.

\begin{theorem}
Let $f \in C([0,1])$ be fixed. Then
\[
\left| \mathscr{L}_n^K(f,x) - f(x) \right| = o(n^{-1}), 
\qquad 0<x<1,
\]
if and only if $f$ is a classical solution of the differential equation
\begin{align} \label{Eq=0}
A(x)\, f_\mu'(x) + B(x)\, f_\mu''(x) = 0,
\end{align}
where
\[
A(x) := \frac{1}{2} + \frac{x-x^2}{2(1+\mu)} - x, 
\qquad 
B(x) := \frac{x-x^2}{2}.
\]
Equivalently, the equation \eqref{Eq=0} can be expressed in the explicit form
\[
\big(1+\mu - x(1+2\mu) - x^2\big)\, f_\mu'(x)
+ (1+\mu)\,(x-x^2)\, f_\mu''(x) = 0, 
\] with $0<x<1.$
\end{theorem}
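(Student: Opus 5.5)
Sufficiency is a direct application of Theorem~\ref{Vor}; necessity uses the parabola technique of Lemma~\ref{Lemma-GCM}. Let $f$ be a classical solution of \eqref{Eq=0} on $(0,1)$, and assume $f\in C^2$ near each interior point. Theorem~\ref{Vor} is local: its proof uses only the Taylor expansion of $f_\mu$ at $x$ and the concentration of the weights $p_{n,k}(a_{n+1}(x))$ near $x$. Hence it applies and gives $n[\mathscr{L}_n^K(f,x)-f(x)]\to {\cal D}(f)(x)=\ln_\mu(x)[A f_\mu'+Bf_\mu''](x)=0$, i.e.\ $|\mathscr{L}_n^K(f,x)-f(x)|=o(n^{-1})$. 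The explicit form of the equation follows by multiplying \eqref{Eq=0} by $2(1+\mu)$:
\[
2(1+\mu)A(x)=(1+\mu)-2x(1+\mu)+x-x^2=1+\mu-x(1+2\mu)-x^2 .
\]

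For necessity, let $f\in C([0,1])$ with $n[\mathscr{L}_n^K(f,x)-f(x)]\to0$ for each $x\in(0,1)$. Fix an open interval $J\Subset(0,1)$ and points $t_1<t_2$ in $J$. Let $z_0$ be the solution of ${\cal D}(z)=0$ with $z_0(t_i)=f(t_i)$; it exists because the fundamental system $w_0$, $w_0\int w_1$ forms an extended Tchebychev system. Set $g:=f-z_0$, so $g(t_1)=g(t_2)=0$. We want to show $g\equiv0$ on $[t_1,t_2]$.

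Suppose instead $g(t_0)>0$ for some $t_0\in(t_1,t_2)$. (Lemma~\ref{Lemma-GCM} asks for $t_0<t_1$; we adapt it, or relabel the points, so that it yields a touching point inside $(t_1,t_2)$.) Take $h\in C^2(J)$ with ${\cal D}(h)>0$ on $J$; for instance $h=w_0\int w_1\int w_2$, which gives ${\cal D}(h)=1$. Lemma~\ref{Lemma-GCM} provides $\alpha<0$, a solution $z$ of ${\cal D}(z)=0$ and $x\in(t_1,t_2)$ such that $\varphi:=\alpha h+z+z_0-f\ge 0$ on $[t_1,t_2]$ and $\varphi(x)=0$.

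Now apply $\mathscr{L}_n^K$ at $x$. Since $\varphi$ is $C^2$ near $x$ apart from $f$, Theorem~\ref{Vor} (localized) gives
\[
n[\mathscr{L}_n^K(\alpha h+z+z_0,x)-(\alpha h+z+z_0)(x)]\to \alpha\,{\cal D}(h)(x)<0,
\]
while by hypothesis $n[\mathscr{L}_n^K(f,x)-f(x)]\to0$. Hence $n\,\mathscr{L}_n^K(\varphi,x)\to\alpha{\cal D}(h)(x)<0$. On the other hand, positivity of $\mathscr{L}_n^K$ gives $\mathscr{L}_n^K(\varphi,x)\ge -\mathscr{L}_n^K(\varphi^-,x)$, where $\varphi^-$ vanishes on $[t_1,t_2]$. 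Because $x$ lies at positive distance $\eta$ from $\{t_1,t_2\}$, the tail estimate $\sum_{|k/(n+1)-a_{n+1}(x)|>\eta/2}p_{n,k}(a_{n+1}(x))\le C(n+1)^{-2}$, together with $a_{n+1}(x)\to x$ (Lemma~\ref{lemma_an+1}), gives $n\,\mathscr{L}_n^K(\varphi^-,x)\to0$. So $\liminf_n n\,\mathscr{L}_n^K(\varphi,x)\ge0$, a contradiction. The case $g(t_0)<0$ is handled by applying the same argument to $-g$. Therefore $f=z_0$ on $[t_1,t_2]$. Since $t_1,t_2$ are arbitrary, $f$ coincides locally with solutions of ${\cal D}=0$. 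Overlapping intervals force these local solutions to agree, because a solution is determined by its values at two points, so $f$ is a classical solution on $(0,1)$.

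\textbf{Main obstacle.} The delicate step is the localization. We must justify applying Theorem~\ref{Vor} to functions that are only $C^2$ near $x$ (or defined just on $J$, extended continuously to $[0,1]$), and show that the contribution away from $x$ is $o(n^{-1})$. I would first try to handle this with the exponential tail bound on $p_{n,k}$ already used in the proof of Theorem~\ref{Vor}, with the Kantorovich cells of length $1/(n+1)$ absorbed for large $n$.
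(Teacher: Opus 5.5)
Your proposal is correct and follows the route the paper indicates: sufficiency comes from the Voronovskaja formula of Theorem~\ref{Vor}, and necessity from the parabola technique of Lemma~\ref{Lemma-GCM} with an auxiliary $h$ satisfying ${\cal D}(h)>0$; you are also right to read the lemma with $t_1<t_0<t_2$, since that is the configuration the argument needs. The localization step you flag as the main obstacle is harmless. First cut off $h$, $z$, $z_0$ (which are in general unbounded near $0$ and $1$) away from $[t_1,t_2]$. The proof of Theorem~\ref{Vor} then goes through verbatim for any bounded function that is $C^2$ near $x$, because it only uses that the Peano remainder $\psi$ is bounded and tends to $0$.
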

The proof proceeds analogously to that one given in \cite{Angeloni2025} for the operators $\mathscr{L}_n$, 
using the Lemma~\ref{Lemma-GCM} and the definition~\eqref{D(f)}.
\vspace{2mm}\\
In conclusion, an inverse approximation theorem for the operators ${\mathscr L}_n^K$ holds, derived directly from Proposition 2 of \cite{GACA2010}.

\begin{theorem}
Let $f \in C([0,1])$ be fixed. Then
\[
n\left| {\mathscr L}_n^K (f, x)-f(x) \right| \ \le \ M + o(1), 
\quad 0<x<1, \quad \text{as } n \to +\infty,
\]
if and only if, for almost every $t \in (0,1)$,
\[
\left|\big(1+\mu - x(1+2\mu) - x^2\big)\, f_\mu'(x)
+ (1+\mu)\,(x-x^2)\, f_\mu''(x)\right| \ \le \ M.
\]
\end{theorem}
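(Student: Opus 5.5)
The plan is to follow the scheme of Proposition 2 in \cite{GACA2010}: a ``parabola technique'' argument in the spirit of Bajsanski--Bojanic, built from three ingredients. The first is positivity and linearity of $\mathscr{L}_n^K$. The second is the Voronovskaja formula of Theorem \ref{Vor}, which gives $n[\mathscr{L}_n^K(g,x)-g(x)]\to{\cal D}(g)(x)$ for $g\in C^2$, with ${\cal D}$ as in \eqref{D(f)}. The third is the factorized form \eqref{FormaD} of ${\cal D}$, together with the comparison Lemma \ref{Lemma-GCM}. Note that the explicit form in the statement is $2(1+\mu)[A f_\mu'+Bf_\mu'']$, which equals $\frac{2(1+\mu)}{\ln_\mu}{\cal D}(f)$. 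Hence a bound by $M$ on it is the same as a bound on $|{\cal D}(f)|$ with the locally bounded positive weight $2(1+\mu)/\ln_\mu$ absorbed into the constant. I will work with ${\cal D}$ and track this constant at the end.

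\emph{Sufficiency.} Assume the bracket is bounded by $M$ a.e. Then $\big(\frac{1}{w_1}(f/w_0)'\big)'=w_2{\cal D}(f)$ is essentially bounded on compact subintervals. Integrating twice, $f$ has $f_\mu'$ absolutely continuous and $f_\mu''\in L^\infty_{loc}$. Fix $x\in(0,1)$ and write the second-order Taylor formula with integral remainder for $f_\mu$ at $x$. The remainder is bounded by $\frac12\|f_\mu''\|_{L^\infty(U)}(t-x)^2$ near $x$, and by a bounded multiple of $(t-x)^2$ far from $x$. Plugging this into $\mathscr{L}_n^K$ exactly as in the proof of Theorem \ref{Vor}, the linear term gives $f_\mu'(x)A(x)/n+o(1/n)$ by Lemma \ref{lemma_n(an+1-x)}, and the quadratic term is controlled by $Q_n(x)$ through \eqref{nQn}. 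To get the sharp constant $M$ rather than merely $O(1)$, I would instead use a comparison argument. I would construct $C^2$ functions $g_\pm$ with ${\cal D}(g_\pm)=\pm M$ near $x$ such that $g_+-f$ and $f-g_-$ vanish at $x$ and are ``convex in the generalized sense'' of ${\cal D}$ there. This is done by solving ${\cal D}(u)=\pm M$ via \eqref{FormaD} with matching data at $x$, and extending the inequality globally with the localization estimate for the tails. Positivity then gives $n|\mathscr{L}_n^K(f,x)-f(x)|\le M+o(1)$.

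\emph{Necessity.} Assume $n|\mathscr{L}_n^K(f,x)-f(x)|\le M+o(1)$ on $(0,1)$. I would show that $F:=\frac{1}{w_1}(f/w_0)'$, understood distributionally, satisfies $|F'|\le Mw_2$, i.e.\ that $F$ is Lipschitz with respect to the measure $w_2\,dt$. I argue by contradiction. Suppose on some $[t_1,t_2]\subset J$ the function $f$ is not dominated by $\alpha h+z$, where $h$ is a $C^2$ solution of ${\cal D}(h)=M+\eta$ for some $\eta>0$ and $z$ is a solution of ${\cal D}(z)=0$. Apply Lemma \ref{Lemma-GCM} to $g=f-(\text{suitable }{\cal D}\text{-interpolant})$ to get $\alpha$, $z$ and a contact point $x$ at which $\alpha h+z-g\ge0$ with equality. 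Since $\mathscr{L}_n^K$ is positive and the difference is nonnegative near $x$, localization (the tail sums decay like $C(n+1)^{-2}$, as in the proof of Theorem \ref{Vor}) gives $n[\mathscr{L}_n^K(\alpha h+z-g,x)]\ge o(1)$. Combined with Theorem \ref{Vor} applied to $h$ and $z$ (note ${\cal D}(z)=0$), this forces $n[\mathscr{L}_n^K(f,x)-f(x)]$ to exceed $M+\eta/2$ in absolute value, which is a contradiction. Hence $F$ has bounded variation increments $|F(s)-F(r)|\le M\int_r^s w_2$. Thus $F$ is absolutely continuous, $f_\mu''$ exists a.e., and the a.e.\ bound follows from \eqref{FormaD}.

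The main obstacle is the localization step. The comparison inequality from Lemma \ref{Lemma-GCM} holds only on $[t_1,t_2]$, while $\mathscr{L}_n^K$ integrates over all of $[0,1]$, including the cells $[\frac{k}{n+1},\frac{k+1}{n+1}]$. I would handle this by modifying $\alpha h+z$ outside a neighbourhood of $x$ by a large multiple of $\ln_\mu(t)(t-x)^2$ so that it dominates globally. I would then use the exponential tail bound $\sum_{|k/(n+1)-a_{n+1}(x)|>\delta}p_{n,k}(a_{n+1}(x))\le C(n+1)^{-2}$ to show that the modification costs only $o(1/n)$.
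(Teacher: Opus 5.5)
Your route is the paper's route. The paper gives no argument beyond invoking Proposition~2 of \cite{GACA2010}, and your sketch (positivity, Theorem~\ref{Vor}, localization via the tail bound, Lemma~\ref{Lemma-GCM} for the factorized operator \eqref{FormaD}) is a reconstruction of that proposition.

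The genuine gap is the step where you ``absorb'' the weight into the constant. You correctly note that the bracket in the statement equals $\frac{2(1+\mu)}{\ln_\mu(x)}\,{\cal D}(f)(x)$. But this factor is not constant: it ranges over $\big[\frac{2(1+\mu)}{\ln(2+\mu)},\frac{2(1+\mu)}{\ln(1+\mu)}\big]$, and it is strictly greater than $1$ because $\ln(2+\mu)<1+\mu$. The same $M$ appears on both sides, and you yourself aim for the sharp constant, so nothing can be absorbed; the promised tracking never happens. Carried out correctly, your necessity argument yields $|F'|\le Mw_2$, i.e.\ $|{\cal D}(f)|\le M$ a.e., and this only gives
\[
\big|\big(1+\mu-x(1+2\mu)-x^2\big)f_\mu'(x)+(1+\mu)(x-x^2)f_\mu''(x)\big|\le \frac{2(1+\mu)}{\ln_\mu(x)}\,M \quad\text{a.e.}
\]
Your last sentence (``the a.e.\ bound follows from \eqref{FormaD}'') is exactly where the proof breaks. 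Sufficiency is unaffected; it even gives the better constant $\frac{\ln(2+\mu)}{2(1+\mu)}M$.

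This cannot be repaired, because the ``only if'' part is false with the same $M$. For any $f\in C^2([0,1])$ with ${\cal D}(f)\not\equiv0$, Theorem~\ref{Vor} gives the left-hand condition with $M=\sup_{(0,1)}|{\cal D}(f)|$. The supremum of the bracket, however, is at least $\frac{2(1+\mu)}{\ln(2+\mu)}>1$ times larger. Concretely, take $f(x)=x\ln(1+\mu+x)$, so that $f_\mu=e_1$. Then one has exactly
\[
n\big[\mathscr{L}_n^K(f,x)-f(x)\big]=\frac{n}{n+1}\,\ln_\mu(x)\Big(n\big(a_{n+1}(x)-x\big)+\frac12-x\Big)\longrightarrow \ln_\mu(x)A(x).
\]
Since $A$ decreases from $\frac12$ to $-\frac12$, the left-hand condition holds with $M=\frac{\ln(2+\mu)}{2}$. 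Yet the bracket equals $1+\mu-x(1+2\mu)-x^2$, which tends to $1+\mu>M$ as $x\to0^+$.

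What your method (and Proposition~2 of \cite{GACA2010}) actually proves is the equivalence with $|{\cal D}(f)|\le M$ a.e. In explicit form, this is the bracket bounded by $\frac{2(1+\mu)}{\ln_\mu(x)}M$. The printed form survives only in the non-sharp reading ``$O(1/n)$ iff the bracket is essentially bounded''. You should state which corrected version you prove.

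Also, in the sufficiency part, make explicit that $\frac{1}{w_1}(f/w_0)'$ is assumed locally absolutely continuous. An a.e.\ bound on $f_\mu''$ alone does not let you ``integrate twice''.
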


\section{Quantitative estimates}
In this section, we derive quantitative estimates for the order of approximation of the operators $\mathscr{L}_n^K$. 

For this purpose, we recall the classical notion of the modulus of continuity of a function $f \in C([0,1])$, defined by
\begin{align*}
    \omega(f,\delta):=\sup_{\substack{x,t \in [0,1]\\[1mm]|t-x|\le\delta}}|f(t)-f(x)|,
\end{align*}
for $\delta >0$. We can prove the following.
\begin{theorem}
For every $n\in \mathbb{N},\, n>1,$ and $f\in C([0,1])$, there holds
\begin{align*}
    \|\mathscr{L}_n^Kf-f\|_\infty\le &\,\omega\left(f_\mu,\frac{1}{\sqrt{n+1}}\right)\ln(2+\mu)\left\{1+\frac{1}{2\sqrt{n+1}}+\sqrt{2}\right\}\\[2mm]
    & + \omega\left(f_\mu,\gamma_n\right)\ln(2+\mu),
\end{align*} where
\begin{align}\label{gamma_n}
    \gamma_n:=\max\limits_{x\in[0,1]} |a_{n+1}(x)-x|.
\end{align}
\end{theorem}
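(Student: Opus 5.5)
We use the identity $f(x)=\ln_\mu(x)f_\mu(x)$ together with $\sum_{k=0}^n p_{n,k}(a_{n+1}(x))(n+1)\int_{k/(n+1)}^{(k+1)/(n+1)}dt=1$. Together they give
\[
\mathscr{L}_n^Kf(x)-f(x)=\ln_\mu(x)\sum_{k=0}^n p_{n,k}(a_{n+1}(x))(n+1)\int\limits_{\frac{k}{n+1}}^{\frac{k+1}{n+1}}\bigl(f_\mu(t)-f_\mu(x)\bigr)\,dt .
\]
Since $\ln_\mu(x)\le\ln(2+\mu)$, it suffices to bound the weighted sum of $|f_\mu(t)-f_\mu(x)|$. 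We split it by the triangle inequality through the intermediate point $a_{n+1}(x)$:
\[
|f_\mu(t)-f_\mu(x)|\le |f_\mu(t)-f_\mu(a_{n+1}(x))|+|f_\mu(a_{n+1}(x))-f_\mu(x)|.
\]
The second term is at most $\omega(f_\mu,\gamma_n)$ by the definition \eqref{gamma_n}. Since the weights sum to one, this term alone yields the contribution $\omega(f_\mu,\gamma_n)\ln(2+\mu)$.

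For the first term we use the standard property $\omega(g,\lambda\delta)\le(1+\lambda)\omega(g,\delta)$, which gives $|f_\mu(t)-f_\mu(y)|\le\omega(f_\mu,\delta)\bigl(1+|t-y|/\delta\bigr)$, with $y:=a_{n+1}(x)$ and $\delta=1/\sqrt{n+1}$. The weighted average of the constant $1$ contributes the term $1$. For the remaining part we write, for $t\in[\frac{k}{n+1},\frac{k+1}{n+1}]$,
\[
|t-y|\le\Bigl|t-\frac{k}{n+1}\Bigr|+\Bigl|\frac{k}{n+1}-y\Bigr|.
\]
The average of the first piece over the cell equals $\frac{1}{2(n+1)}$. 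After dividing by $\delta$, it gives $\frac{\sqrt{n+1}}{2(n+1)}=\frac{1}{2\sqrt{n+1}}$. For the second piece we invoke inequality \eqref{DisLemma5.1} (Lemma~5.1 of \cite{Angeloni2024}) at the point $y=a_{n+1}(x)\in[0,1]$. This is legitimate for $n>1$ and gives $\sum_k|y-\frac{k}{n+1}|p_{n,k}(y)<\sqrt2/\sqrt{n+1}$. Dividing by $\delta$ yields $\sqrt2$. Collecting the pieces gives $\omega(f_\mu,\frac{1}{\sqrt{n+1}})\ln(2+\mu)\{1+\frac{1}{2\sqrt{n+1}}+\sqrt2\}$, and we then take the supremum over $x$.

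The only delicate point is choosing the intermediate point $a_{n+1}(x)$ rather than $x$. With that choice the moment estimate \eqref{DisLemma5.1} applies exactly at the argument of the basis polynomials, and the drift $a_{n+1}(x)-x$ is isolated into the separate $\omega(f_\mu,\gamma_n)$ term. By contrast, expanding around $x$ would require a bound for $\sum_k|x-\frac{k}{n+1}|p_{n,k}(a_{n+1}(x))$, which is not directly available. Everything else is routine bookkeeping with the modulus of continuity.
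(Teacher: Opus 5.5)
Your proof is correct and follows essentially the same route as the paper. Both arguments pass through the intermediate point $a_{n+1}(x)$ and isolate the drift into the $\omega(f_\mu,\gamma_n)\ln(2+\mu)$ term. Both then bound the remainder via $\omega(f_\mu,\lambda\delta)\le(1+\lambda)\,\omega(f_\mu,\delta)$ with $\delta=1/\sqrt{n+1}$, the cell average $\frac{1}{2(n+1)}$, and inequality \eqref{DisLemma5.1} evaluated at $a_{n+1}(x)$. The only cosmetic difference is that you split with the triangle inequality on function values, whereas the paper splits $|t-x|\le|t-a_{n+1}(x)|+\gamma_n$ inside $\omega(f_\mu,|t-x|)$, implicitly using monotonicity and subadditivity of the modulus.
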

\begin{proof}
We note that
\begin{align*}
    \mathscr{L}_n^K f(x)-f(x)&=\mathscr{L}_n^K f(x)-f(x)\sum_{k=0}^n p_{n,k}(a_{n+1}(x))(n+1)\int\limits_{\frac{k}{n+1}}^\frac{k+1}{n+1}dt\\[2mm]
    &=\mathscr{L}_n^K f(x)-\ln_\mu(x)\sum_{k=0}^n p_{n,k}(a_{n+1}(x))(n+1)\int\limits_{\frac{k}{n+1}}^\frac{k+1}{n+1}f_\mu(x)dt
    \end{align*}\begin{align*}
    \hspace{30mm}=\ln_\mu(x)\sum_{k=0}^n p_{n,k}(a_{n+1}(x))(n+1)\int\limits_{\frac{k}{n+1}}^\frac{k+1}{n+1}\Big[f_\mu(t)-f_\mu(x)\Big]dt,
\end{align*} for every fixed $x\in [0,1].$ Using the properties of the modulus of continuity (see, e.g., \cite{ConApp}), we obtain
\begin{align*}
    |\mathscr{L}_n^K f(x)-f(x)|&\le \ln_\mu(x)\sum_{k=0}^n p_{n,k}(a_{n+1}(x))(n+1)\int\limits_{\frac{k}{n+1}}^\frac{k+1}{n+1}|f_\mu(t)-f_\mu(x)|\,dt\\[0.5em]
    &\le \ln_\mu(x)\sum_{k=0}^n p_{n,k}(a_{n+1}(x))(n+1)\int\limits_{\frac{k}{n+1}}^\frac{k+1}{n+1} \omega(f_\mu,|t-x|)\,dt,
\end{align*} where
\begin{align*}
|t-x|\le |t-a_{n+1}(x)|+|a_{n+1}(x)-x|\le |t-a_{n+1}(x)|+\gamma_n,
\end{align*} with $\gamma_n:=\max\limits_{x\in [0,1]} |a_{n+1}(x)-x|$, $n\in\mathbb{N}$. We note that $\gamma_n$ tends to $0$, as $n \to +\infty,$ by Lemma \ref{lemma_an+1}. 
Therefore,
\begin{align*}
    |\mathscr{L}_n^K f(x)-f(x)|&\le \ln_\mu(x)\sum_{k=0}^n p_{n,k}(a_{n+1}(x))(n+1)\int\limits_{\frac{k}{n+1}}^\frac{k+1}{n+1} \omega(f_\mu,|t-a_{n+1}(x)|)\,dt\\&\hspace{5mm}+\ln_\mu(x)\sum_{k=0}^n p_{n,k}(a_{n+1}(x))(n+1)\int\limits_{\frac{k}{n+1}}^\frac{k+1}{n+1} \omega(f_\mu,\gamma_n)\,dt\\&=:I_1+I_2.
\end{align*} 
For the estimation of $I_2$, we immediately have
\begin{align*}
    I_2 \le \omega(f_\mu,\gamma_n)\ln_\mu(x)\sum_{k=0}^n p_{n,k}(a_{n+1}(x))\le \omega(f_\mu,\gamma_n)\ln(2+\mu).
\end{align*}
To estimate $I_1$, we compute
\begin{align}\label{StimaInt_t-an+1}
    \int\limits_{\frac{k}{n+1}}^\frac{k+1}{n+1} |t-a_{n+1}(x)|dt&\le \int\limits_{\frac{k}{n+1}}^\frac{k+1}{n+1} \left|t-\frac{k}{n+1}\right|dt+\int\limits_{\frac{k}{n+1}}^\frac{k+1}{n+1} \left|\frac{k}{n+1}-a_{n+1}(x)\right|dt\notag\\[2mm]
    &=\frac{1}{2(n+1)^2}+\frac{1}{n+1}\left|\frac{k}{n+1}-a_{n+1}(x)\right|,
\end{align} and we recall the well-known property:
\begin{align*}
    \omega(f,\lambda \delta)\le (1+\lambda)\,\omega(f,\delta), \qquad \lambda,\delta>0.
\end{align*}
As a consequence, we have
\begin{align*}
I_1 &\le \omega\left(f_\mu,\frac{1}{\sqrt{n+1}}\right)\ln_\mu(x)\sum_{k=0}^n p_{n,k}(a_{n+1}(x))(n+1)\\[0.5em]
&\hspace{5mm}\cdot\int\limits_{\frac{k}{n+1}}^\frac{k+1}{n+1} \Big(1+\sqrt{n+1}\,|t-a_{n+1}(x)|\Big)\,dt\\[2mm]
&\le \omega\left(f_\mu,\frac{1}{\sqrt{n+1}}\right)\ln_\mu(x) \Bigg\{1+ (n+1)\sqrt{n+1}\,\sum_{k=0}^n p_{n,k}(a_{n+1}(x))\\[2mm]
&\hspace{5mm}\cdot \int\limits_{\frac{k}{n+1}}^\frac{k+1}{n+1} |t-a_{n+1}(x)|\,dt\Bigg\}\\[2mm]
&\le \omega\left(f_\mu,\frac{1}{\sqrt{n+1}}\right)\ln_\mu(x) \Bigg\{1+ (n+1)\sqrt{n+1}\,\sum_{k=0}^n p_{n,k}(a_{n+1}(x))\\[2mm]
&\hspace{5mm}\cdot \int\limits_{\frac{k}{n+1}}^\frac{k+1}{n+1} \left(\left|t-\frac{k}{n+1}\right|+\left|\frac{k}{n+1}-a_{n+1}(x)\right|\right)dt\Bigg\}\\[2mm]
&=\omega\left(f_\mu,\frac{1}{\sqrt{n+1}}\right)\ln_\mu(x) \Bigg\{1+ (n+1)\sqrt{n+1}\,\sum_{k=0}^n p_{n,k}(a_{n+1}(x))\\[2mm]
&\hspace{5mm}\cdot \left(\frac{1}{2(n+1)^2}+\frac{1}{n+1}\left|\frac{k}{n+1}-a_{n+1}(x)\right|\right)\Bigg\}\\[2mm]
&\le \omega\left(f_\mu,\frac{1}{\sqrt{n+1}}\right)\ln_\mu(x) \Bigg\{1+\frac{\sqrt{n+1}}{2(n+1)}+\sqrt{n+1}\,\sum_{k=0}^n p_{n,k}(a_{n+1}(x))\\[2mm]
&\hspace{5mm}\cdot\left|\frac{k}{n+1}-a_{n+1}(x)\right|\Bigg\}\\[2mm]
&\le \omega\left(f_\mu,\frac{1}{\sqrt{n+1}}\right)\ln(2+\mu)\left\{1+\frac{1}{2\sqrt{n+1}}+\sqrt{2}\right\},
\end{align*} using the estimate \eqref{DisLemma5.1} in Lemma~5.1 of \cite{Angeloni2024}, for every $x\in [0,1]$ and $n>1$. Finally, by combining the estimates of $I_1$ and $I_2$, we obtain the desired result.\\
\end{proof}
Now, we provide a quantitative estimate in $L^p$ spaces. 
To this end, we make use of the Peetre $K$–functional (see, e.g., \cite{ConApp}), defined for $f\in L^p([0,1])$ by
\begin{align*}
    \mathcal{K}(f,t)_p := \inf_{g\in C^1([0,1])} 
    \Big\{\, \|f-g\|_p + t\,\|g\|_{C^1} \Big\}, 
    \qquad t>0,\; 1\le p<+\infty,
\end{align*}
where $C^1([0,1])$ denotes the space of functions $g:[0,1]\to \mathbb{R}$ with a continuous first derivative, endowed with the norm
\[
\|g\|_{C^1} := \|g\|_\infty + \|g'\|_\infty.
\]
Clearly, $C^1([0,1])$ is continuously embedded in $L^p([0,1])$. 

Moreover, it is well known that there exists a constant $C>0$ such that
\begin{align*}
    \omega(f,t)_p \le C\, \mathcal{K}(f,t)_p, 
    \qquad t>0,\; 1\le p<+\infty,
\end{align*}
where $\omega(f,t)_p$ denotes the first-order modulus of smoothness in $L^p([0,1])$. For completeness, we recall that the $r$-th modulus of smoothness is given by
\begin{align*}
    \omega_r(f,t)_p
    := \sup_{0<h\le t} \|\Delta_h^r (f,\cdot)\|_p(A_{r h}),
    \qquad  t\ge 0,
\end{align*}
where $r=1,2,\dots,h\ge0$, $A_{r h} := [0,1-rh]$, and $\Delta_h^r$ is the $r$-th order forward difference, defined by
\[
\Delta_h^r(f,x) := \sum_{k=0}^r (-1)^{r-k} \binom{r}{k} f(x+k h), 
\]
for all $x \in A_{rh}$ (see, e.g., \cite{ConApp}). We have the following.
\begin{theorem}\label{ThPeetre1}
    Let $f\in L^p([0,1])$, $1\le p < +\infty$. Then, for every sufficiently large $n\in\mathbb{N}$, there holds
    \begin{align*}
    \|\mathscr{L}_n^K f - f\|_p
    &\le\frac{\ln(2+\mu)}{\ln(1+\mu)} (K_\mu+1)\,
    \mathcal{K}\!\left(f,\Lambda_n\right)_p,
\end{align*}
where
\begin{align*}
\Lambda_n &:= \frac{\ln(2+\mu)}{\ln(1+\mu)}\Bigg(1+\frac{1}{(1+\mu)\ln(1+\mu)}\Bigg)(K_\mu+1)^{-1}\,T_n,
\end{align*}with\begin{align} \label{Tn}
T_n&:=\frac{1}{2(n+1)}+\frac{\sqrt{2}}{\sqrt{n+1}}+\gamma_n,
\end{align} 
$K_\mu$ is the constant arising from Theorem \ref{th_Kmu} and $\gamma_n$ is defined in \eqref{gamma_n}.
\end{theorem}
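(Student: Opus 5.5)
The plan is the standard $K$-functional argument. Fix an arbitrary $g\in C^1([0,1])$ and split, by linearity,
\[
\|\mathscr{L}_n^K f - f\|_p \le \|\mathscr{L}_n^K (f-g)\|_p + \|\mathscr{L}_n^K g - g\|_p + \|g-f\|_p .
\]
For the first term I will use \eqref{stima_normap} and Theorem~\ref{th_Kmu}:
\[
\|\mathscr{L}_n^K(f-g)\|_p\le \ln(2+\mu)\|\mathscr{L}_n^K(f-g)\|_{p,\mu}\le \ln(2+\mu)K_\mu^{1/p}\|f-g\|_{p,\mu}\le \frac{\ln(2+\mu)}{\ln(1+\mu)}K_\mu\|f-g\|_p,
\]
since $K_\mu\ge1$ gives $K_\mu^{1/p}\le K_\mu$. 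Because $\ln(2+\mu)/\ln(1+\mu)\ge 1$, the third term is also bounded by $\frac{\ln(2+\mu)}{\ln(1+\mu)}\|f-g\|_p$. Together the first and third terms give $\frac{\ln(2+\mu)}{\ln(1+\mu)}(K_\mu+1)\|f-g\|_p$.

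For the middle term, $g$ is smooth, so I will bound $\|\mathscr{L}_n^Kg-g\|_p\le\|\mathscr{L}_n^Kg-g\|_\infty$ pointwise, following the proof of the modulus-of-continuity estimate. Write
\[
\mathscr{L}_n^Kg(x)-g(x)=\ln_\mu(x)\sum_k p_{n,k}(a_{n+1}(x))(n+1)\int_{k/(n+1)}^{(k+1)/(n+1)}[g_\mu(t)-g_\mu(x)]\,dt .
\]
By the mean value theorem, $|g_\mu(t)-g_\mu(x)|\le\|g_\mu'\|_\infty|t-x|$. Next I need $\|g_\mu'\|_\infty$ in terms of $\|g\|_{C^1}$. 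From
\[
g_\mu'=\frac{g'}{\ln_\mu}-\frac{g}{(1+\mu+x)\ln_\mu^2},
\]
I get
\[
\|g_\mu'\|_\infty\le \frac{\|g'\|_\infty}{\ln(1+\mu)}+\frac{\|g\|_\infty}{(1+\mu)\ln^2(1+\mu)}\le\frac{1}{\ln(1+\mu)}\Big(1+\frac{1}{(1+\mu)\ln(1+\mu)}\Big)\|g\|_{C^1}.
\]
This is exactly the factor appearing in $\Lambda_n$.

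Then I split $|t-x|\le|t-k/(n+1)|+|k/(n+1)-a_{n+1}(x)|+|a_{n+1}(x)-x|$, as in \eqref{StimaInt_t-an+1}. Integrating over each cell, summing with the weights $p_{n,k}$, and invoking \eqref{DisLemma5.1} for $n>1$, the mean of $|t-x|$ is at most $\frac{1}{2(n+1)}+\frac{\sqrt2}{\sqrt{n+1}}+\gamma_n=T_n$. Multiplying by $\ln_\mu(x)\le\ln(2+\mu)$ gives
\[
\|\mathscr{L}_n^Kg-g\|_p\le \frac{\ln(2+\mu)}{\ln(1+\mu)}\Big(1+\frac{1}{(1+\mu)\ln(1+\mu)}\Big)T_n\|g\|_{C^1}.
\]

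Collecting the three terms and factoring out $\frac{\ln(2+\mu)}{\ln(1+\mu)}(K_\mu+1)$, the bracket becomes $\|f-g\|_p+\Lambda_n\|g\|_{C^1}$. Taking the infimum over $g\in C^1([0,1])$ yields the claim. The only delicate steps are bookkeeping: absorbing $K_\mu^{1/p}$ into $K_\mu$, and matching the constants in front of $\|f-g\|_p$ to the form stated in the theorem. The phrase ``$n$ sufficiently large'' is needed only so that \eqref{DisLemma5.1} applies, i.e.\ $n>1$.
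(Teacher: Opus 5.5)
Your proposal is correct and follows essentially the paper's argument. It uses an auxiliary $g\in C^1([0,1])$, Theorem~\ref{th_Kmu} together with \eqref{stima_normap} for the $f-g$ terms, and a sup-norm bound for $\mathscr{L}_n^Kg-g$ via \eqref{StimaInt_t-an+1} and \eqref{DisLemma5.1}; your mean-value bound on $g_\mu'$ is the paper's splitting into $\mathscr{L}_n^Kg-g\,\mathscr{L}_n^Ke_0$ and $g\,(\mathscr{L}_n^Ke_0-e_0)$ in compact form, giving the same bound in terms of $\|g\|_{C^1}$. The one difference is that you apply the triangle inequality in the unweighted norm, so the middle term avoids the paper's extra factor $\ln(2+\mu)/\ln(1+\mu)$, and your bracket has coefficient $\frac{\ln(1+\mu)}{\ln(2+\mu)}\Lambda_n\le\Lambda_n$ in front of $\|g\|_{C^1}$, not exactly $\Lambda_n$ as you wrote; this still gives the stated bound because $\mathcal{K}(f,\cdot)_p$ is nondecreasing, so say ``is at most'' rather than ``becomes''.
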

\begin{proof}
For every fixed $f\in L^p([0,1])$, $1\le p < +\infty$, and $g \in C^1([0,1])$, we have that
\begin{align*}
    \|\mathscr{L}_n^Kf-f\|_{p,\mu} &\le \|\mathscr{L}_n^Kf-\mathscr{L}_n^Kg\|_{p,\mu}+\|\mathscr{L}_n^Kg-g\|_{p,\mu}+\|g-f\|_{p,\mu}\\[0.5em]
    & \le (K_\mu+1)\|g-f\|_{p,\mu}+\|\mathscr{L}_n^Kg-g\|_{p,\mu}\\[0.5em]
    &\le \frac{(K_\mu+1)}{\ln(1+\mu)}\,\|g-f\|_p+\frac{1}{\ln(1+\mu)}\|\mathscr{L}_n^Kg-g\|_{p},
\end{align*} using Theorem \ref{th_Kmu} and the estimate \eqref{stima_normap}. Therefore, 
\begin{align}\label{DisNormap}
     \|\mathscr{L}_n^Kf-f\|_{p}\le \frac{\ln(2+\mu)}{\ln(1+\mu)} \Big[(K_\mu+1)\|g-f\|_p+\|\mathscr{L}_n^Kg-g\|_{p}\Big].
\end{align}
In particular, we consider
\begin{align} \label{LnKg-g}
    |\mathscr{L}_n^Kg(x)-g(x)|&=|\mathscr{L}_n^Kg(x)-g(x)\mathscr{L}_n^Ke_0(x)+g(x)\mathscr{L}_n^Ke_0(x)-g(x)|\notag\\[0.5em]
    &\le |\mathscr{L}_n^Kg(x)-g(x)\mathscr{L}_n^Ke_0(x)|+|g(x)|\cdot|\mathscr{L}_n^Ke_0(x)-e_0(x)|,\notag\\[0.3em]
\end{align}where
\begin{align} \label{LnKg-gLnKe0}
    \mathscr{L}_n^Kg(x)&-g(x)\mathscr{L}_n^Ke_0(x)\notag\\[-0.2em]&=\ln_\mu(x)\sum_{k=0}^np_{n,k}(a_{n+1}(x))(n+1)\int\limits_{\frac{k}{n+1}}^\frac{k+1}{n+1}\frac{g(t)-g(x)}{\ln_\mu(t)} dt\notag\\[0.2em]
    &=\ln_\mu(x)\sum_{k=0}^np_{n,k}(a_{n+1}(x))(n+1)\int\limits_{\frac{k}{n+1}}^\frac{k+1}{n+1}g'(\xi_{t,x})\,\frac{(t-x)}{\ln_\mu(t)}dt,
\end{align} where $\xi_{t,x}$ indicates a suitable point between $t$ and $x$, and so
\begin{align*}
    |\mathscr{L}_n^Kg(x)-g(x)\mathscr{L}_n^Ke_0(x)|&\le \|g'\|_\infty\ln_\mu(x)\sum_{k=0}^np_{n,k}(a_{n+1}(x))(n+1)\\[0.5em]&\hspace{5mm}\cdot\int\limits_{\frac{k}{n+1}}^\frac{k}{n+1}\frac{|t-x|}{\ln_\mu(t)}dt\\[0.5em]&\le\|g'\|_\infty\frac{\ln(2+\mu)}{\ln(1+\mu)}\sum_{k=0}^np_{n,k}(a_{n+1}(x))\Bigg\{(n+1)\end{align*}\begin{align*}
    \hspace{45mm}\cdot\int\limits_{\frac{k}{n+1}}^\frac{k}{n+1}|t-a_{n+1}(x)|dt+|a_{n+1}(x)-x|\Bigg\}.
\end{align*} Using the inequalities \eqref{StimaInt_t-an+1} and \eqref{DisLemma5.1}, we obtain that
\begin{align} \label{|LnKg-gLnKe0|}
|\mathscr{L}_n^Kg(x)-g(x)\mathscr{L}_n^Ke_0(x)|&\le\|g'\|_\infty\frac{\ln(2+\mu)}{\ln(1+\mu)}\sum_{k=0}^np_{n,k}(a_{n+1}(x))\left\{\frac{1}{2(n+1)}\right.
\notag\\[0.5em]&\hspace{5mm}\left.+\left|\frac{k}{n+1}-a_{n+1}(x)\right|+\gamma_n \right\}\notag\\[0.5em]
&\le\|g'\|_\infty\frac{\ln(2+\mu)}{\ln(1+\mu)}\left\{\frac{1}{2(n+1)}+\frac{\sqrt{2}}{\sqrt{n+1}}+\gamma_n\right\},\notag\\[1mm]
\end{align} where $\gamma_n:=\max\limits_{x\in[0,1]}|a_{n+1}(x)-x|$, for every $n>1$. On the other hand, we get
\begin{align*}
    |g(x)|\cdot|\mathscr{L}_n^Ke_0(x)-e_0(x)|\le&\,|g(x)|\cdot \ln_\mu(x)\sum_{k=0}^n p_{n,k}(a_{n+1}(x))(n+1)\\[0.5em]&\cdot\int\limits_{\frac{k}{n+1}}^\frac{k+1}{n+1}\left|\frac{1}{\ln_\mu(t)}-\frac{1}{\ln_\mu(x)}\right|dt.
\end{align*} We note that the function $\frac{1}{\ln_\mu(\cdot)}$ is of class $C^1$ on $[0,1]$, hence it is Lipschitz continuous. In particular, there exists a constant $L=L(\mu)>0$ such that 
\begin{align*}
    \left|\frac{1}{\ln_\mu(t)}-\frac{1}{\ln_\mu(x)}\right|
    \;\le\; L\,|t-x|, \qquad t,x\in[0,1],
\end{align*}
where
$L=(1+\mu)^{-1}\ln^{-2}(1+\mu).$ Therefore, by the fundamental theorem of calculus we have
\begin{align*}
    |g(x)|\cdot|\mathscr{L}_n^Ke_0(x)-e_0(x)|
    \le&\, \,\Big(||g||_\infty+||g'||_\infty\Big)\frac{\ln_\mu(x)}{(1+\mu)\ln^2(1+\mu)}\\[0.5em]
    &\cdot\sum_{k=0}^n p_{n,k}(a_{n+1}(x))(n+1)\int\limits_{\tfrac{k}{n+1}}^{\tfrac{k+1}{n+1}} |t-x|\, dt.
\end{align*}
Now, we estimate the integral. For every $t\in\big[\tfrac{k}{n+1},\tfrac{k+1}{n+1}\big]$, writing as before
\begin{align*}
    |t-x|\;\le\;\Big|t-\tfrac{k}{n+1}\Big|
    +\Big|\tfrac{k}{n+1}-a_{n+1}(x)\Big|
    +|a_{n+1}(x)-x|,
\end{align*}
\begin{align*}
    (n+1)\int\limits_{\tfrac{k}{n+1}}^{\tfrac{k+1}{n+1}} |t-x|\,dt
    &\le (n+1)\int\limits_{\tfrac{k}{n+1}}^{\tfrac{k+1}{n+1}}\Big|t-\tfrac{k}{n+1}\Big|dt
    +\Big|\tfrac{k}{n+1}-a_{n+1}(x)\Big|\\[0.5em]
    &\hspace{5mm}+|a_{n+1}(x)-x|\\[0.5em]
    &\le \frac{1}{2(n+1)}+\Big|\tfrac{k}{n+1}-a_{n+1}(x)\Big|+\gamma_n,
\end{align*}
where $\gamma_n$ is defined in \eqref{gamma_n}. Now, using \eqref{DisLemma5.1} by Lemma 5.1 of \cite{Angeloni2024}, we obtain 
\begin{align}\label{g*LnKe0-e0}
    |g(x)|\cdot|\mathscr{L}_n^Ke_0(x)-e_0(x)|\le \frac{\,\ln(2+\mu)}{(1+\mu)\ln^2(1+\mu)}T_n\,\Big( ||g||_\infty+||g'||_\infty\Big),
\end{align} where \begin{align*} 
    T_n:=\frac{1}{2(n+1)}+\frac{\sqrt{2}}{\sqrt{n+1}}+\gamma_n,
\end{align*} and consequently,
\begin{align*}
    ||\mathscr{L}^Kg-g||_p&\le \frac{\ln(2+\mu)}{\ln(1+\mu)}\Bigg(1+\frac{1}{(1+\mu)\ln(1+\mu)}\Bigg)T_n\,||g'||_\infty\\[0.5em]
    &\hspace{5mm}+\frac{\ln(2+\mu)}{(1+\mu)\ln^2(1+\mu)}\,T_n ||g||_\infty\\[0.5em]
    &\le\frac{\ln(2+\mu)}{\ln(1+\mu)}\Bigg(1+\frac{1}{(1+\mu)\ln(1+\mu)}\Bigg)T_n\,\Big(||g||_\infty+||g'||_\infty\Big).
\end{align*} 
By the previous estimates, we deduce that
\begin{align*}
    \|\mathscr{L}_n^K f - f\|_p
    \le &\,\frac{\ln(2+\mu)}{\ln(1+\mu)}\Bigg[(K_\mu+1)\|f-g\|_p\\[0.5em]&
      + \frac{\ln(2+\mu)}{\ln(1+\mu)}\Bigg(1+\frac{1}{(1+\mu)\ln(1+\mu)}\Bigg)\,T_n\,\|g\|_{C^1}\Bigg].
\end{align*}
Passing to the infimum with respect to $g\in C^1([0,1])$ and using the definition of the Peetre $K$-functional, we get
\begin{align*}
    \|\mathscr{L}_n^K f - f\|_p
    &\le \frac{\ln(2+\mu)}{\ln(1+\mu)} (K_\mu+1)\,
    \mathcal{K}\!\left(f,\Lambda_n\right)_p,
\end{align*}
where
\begin{align*}
    \Lambda_n := \frac{\ln(2+\mu)}{\ln(1+\mu)}\Bigg(1+\frac{1}{(1+\mu)\ln(1+\mu)}\Bigg)(K_\mu+1)^{-1}\,T_n,
\end{align*}for every $n \in \mathbb{N}$ sufficiently large.
\\
\end{proof}

In Theorem~\ref{ThPeetre1} we derived a quantitative estimate in terms of the $K$-functional $\mathcal{K}(f,\delta)_p$, considering functions in the subspace $C^1([0,1])$ endowed with the $\|\cdot\|_{C^1}$ norm. 

Now we introduce a slightly different form of the $K$-functional, defined for $f \in L^p([0,1])$ by
\[
\widetilde{\mathcal{K}}(f,t)_p :=
\inf_{g\in W^{1,p}([0,1])}
\Big\{ \|f-g\|_p + t\,\|g\|_{W^{1,p}} \Big\},
\qquad t>0,\; 1\le p < +\infty.
\]
Here $W^{1,p}([0,1])$ denotes the Sobolev space of all the absolutely continuous functions 
$g:[0,1]\to\mathbb{R}$ with first derivative $g'\in L^p([0,1])$, equipped with the norm
\[
\|g\|_{W^{1,p}} := \|g\|_p + \|g'\|_p.
\]
Thus, in the definition of $\widetilde{\mathcal{K}}(f,t)_p$ we use the Sobolev space $W^{1,p}([0,1])$ instead of $C^1([0,1])$. The advantage is that this definition is slightly more general, allowing us to derive quantitative approximation estimates for the operators $\mathscr{L}_n^K$ in terms of the $K$-functional $\widetilde{\mathcal{K}}(f,t)$. To this purpose, it will be useful to recall the notion of Hardy--Littlewood maximal function, defined by
\[
M(f;x) := \sup_{\substack{0<t\le 1\\[1mm]t\neq x}}\,\, \frac{1}{t-x}\int\limits_{x}^t |f(u)|\,du,
\]
which satisfies (see, e.g., \cite{stein1970singular})
\begin{align}\label{Cp}
    \|M(f;\cdot)\|_p \le C_p \|f\|_p,\quad  1<p<+\infty,
\end{align}
for $f\in L^p([0,1])$, and for a suitable constant $C_p>0$ depending only on $p$.
\begin{theorem}\label{ThPeetre2}
For every sufficiently large $n\in \mathbb{N}$ and $f \in L^p([0,1])$, $1<p<+\infty$, there holds
\begin{align*}
\|\mathscr{L}^K_nf-f\|_p  \le \frac{\ln(2+\mu)}{\ln(1+\mu)}(K_\mu+1)\widetilde{K}(f,\Gamma_n)_p,
\end{align*}
where
\begin{align*}
\Gamma_n:=2^{\frac{p-1}{p}}\,C_{p,\mu}\,\frac{\ln(2+\mu)}{\ln(1+\mu)}\,(K_\mu+1)^{-1}\,T_n,
\end{align*} $K_\mu$ is the constant arising from Theorem \ref{th_Kmu}, $T_n$ is defined in \eqref{Tn} and $$C_{p,\mu}:=\max\left\{\frac{1}{(1+\mu)\ln(1+\mu)};C_p\right\}$$
with $C_p$ defined in \eqref{Cp}.
\end{theorem}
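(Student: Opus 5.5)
The plan follows the structure of the proof of Theorem~\ref{ThPeetre1}. The only change is that $g$ now ranges over $W^{1,p}([0,1])$, so every sup-norm of $g$ or $g'$ has to be replaced by an $L^p$ bound obtained through the maximal function.

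For fixed $f\in L^p([0,1])$ and $g\in W^{1,p}([0,1])$, the triangle inequality, Theorem~\ref{th_Kmu} and \eqref{stima_normap} give \eqref{DisNormap} exactly as before:
\[
\|\mathscr{L}_n^Kf-f\|_p\le \frac{\ln(2+\mu)}{\ln(1+\mu)}\Big[(K_\mu+1)\|g-f\|_p+\|\mathscr{L}_n^Kg-g\|_p\Big].
\]
It then suffices to prove
\[
\|\mathscr{L}_n^Kg-g\|_p\le 2^{\frac{p-1}{p}}C_{p,\mu}\frac{\ln(2+\mu)}{\ln(1+\mu)}\,T_n\,\|g\|_{W^{1,p}}.
\]
Taking the infimum over $g$ then yields the claim with the stated $\Gamma_n$.

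To bound $\mathscr{L}_n^Kg-g$ pointwise, I split it as in \eqref{LnKg-g}.

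For the first piece, $g$ is absolutely continuous, so I write $g(t)-g(x)=\int_x^t g'(u)\,du$. This gives $|g(t)-g(x)|\le |t-x|\,M(g';x)$ by the definition of the maximal function; here the sup over $t\in(0,1]$, $t\ne x$, covers both $t>x$ and $t<x$, since $\frac{1}{t-x}\int_x^t$ is symmetric in sign. The rest of the computation leading to \eqref{|LnKg-gLnKe0|} is unchanged, with $\|g'\|_\infty$ replaced by $M(g';x)$:
\[
|\mathscr{L}_n^Kg(x)-g(x)\mathscr{L}_n^Ke_0(x)|\le \frac{\ln(2+\mu)}{\ln(1+\mu)}\,T_n\,M(g';x).
\]
It uses \eqref{StimaInt_t-an+1}, \eqref{DisLemma5.1} and $\gamma_n$.

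For the second piece, the Lipschitz bound for $1/\ln_\mu$ with $L=(1+\mu)^{-1}\ln^{-2}(1+\mu)$ and the same moment estimate give
\[
|g(x)|\,|\mathscr{L}_n^Ke_0(x)-e_0(x)|\le \frac{\ln(2+\mu)}{(1+\mu)\ln^2(1+\mu)}\,T_n\,|g(x)|.
\]
This is pointwise in $x$ and involves no sup-norm of $g$. It is the key point, because $W^{1,p}$ functions are bounded, but we want the $L^p$ norm to appear.

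Summing the two pieces and factoring out $\frac{\ln(2+\mu)}{\ln(1+\mu)}T_n$, the remaining coefficients are $1$ and $\frac{1}{(1+\mu)\ln(1+\mu)}$. I then take $L^p$ norms of $M(g';\cdot)+\frac{1}{(1+\mu)\ln(1+\mu)}|g|$ and use the convexity inequality $(a+b)^p\le 2^{p-1}(a^p+b^p)$, which produces the factor $2^{\frac{p-1}{p}}$. Next I apply the maximal inequality \eqref{Cp}, $\|M(g';\cdot)\|_p\le C_p\|g'\|_p$, which needs $1<p<+\infty$. Bounding both constants by $C_{p,\mu}$ gives $\|\cdot\|_p\le 2^{\frac{p-1}{p}}C_{p,\mu}\big(\|g'\|_p^p+\|g\|_p^p\big)^{1/p}\le 2^{\frac{p-1}{p}}C_{p,\mu}\|g\|_{W^{1,p}}$.

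The main obstacle is justifying $|g(t)-g(x)|\le|t-x|M(g';x)$ for merely absolutely continuous $g$, including measurability of $M(g';\cdot)$ on $[0,1]$ and the direction $t<x$. I would handle this by noting that the one-sided averages over $[t,x]$ are dominated by the paper's $M$, since the sup over all $t\neq x$ already includes them. Otherwise the argument tracks constants; "$n$ sufficiently large" is needed only so that \eqref{DisLemma5.1} applies ($n>1$).
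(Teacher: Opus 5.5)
Your proposal is correct and follows essentially the same route as the paper. Both use the triangle-inequality bound \eqref{DisNormap} and the splitting \eqref{LnKg-g}, then control the first piece with $|g(t)-g(x)|\le |t-x|\,M(g';x)$ and the second with the pointwise-in-$|g(x)|$ form of \eqref{g*LnKe0-e0}. They finish with the convexity factor $2^{\frac{p-1}{p}}$, the maximal inequality \eqref{Cp}, and the infimum over $g\in W^{1,p}([0,1])$; your remarks on the two-sided nature of $M$ and on $n>1$ correctly fill in details the paper leaves implicit.
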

\begin{proof}
Let $g\in AC[0,1]$, with $g' \in L^p([0,1])$, $1<p<+\infty$, be fixed. Using the same computations made in \eqref{LnKg-g} and \eqref{LnKg-gLnKe0}, we similarly obtain
\begin{align*}
    |\mathscr{L}_n^Kg(x)-g(x)|&\le |g(x)|\cdot|\mathscr{L}_n^Ke_0(x)-e_0(x)|+\ln_\mu(x)\\[0.5em]
    &\hspace{5mm}\cdot\sum_{k=0}^np_{n,k}(a_{n+1}(x))(n+1)\int\limits_{\frac{k}{n+1}}^\frac{k}{n+1}\frac{1}{\ln_\mu(t)}\left|\int\limits_x^tg'(u)du\right|dt\\[0.5em]
    &\le |g(x)|\cdot|\mathscr{L}_n^Ke_0(x)-e_0(x)|+\ln(2+\mu)\,|M(g';x)|\\[0.5em]
    &\hspace{5mm}\cdot\sum_{k=0}^np_{n,k}(a_{n+1}(x))(n+1)\int\limits_{\frac{k}{n+1}}^\frac{k}{n+1} \frac{|t-x|}{\ln_\mu(t)}\,dt,
\end{align*} for sufficiently large $n\in \mathbb{N}$. Therefore, using the inequalities \eqref{|LnKg-gLnKe0|} and \eqref{g*LnKe0-e0}, we deduce that
\begin{align*}
    |\mathscr{L}^K_ng(x)-g(x)|&\le |g(x)| \frac{\ln(2+\mu)}{(1+\mu)\ln^2(1+\mu)}\left\{\frac{1}{2(n+1)}+\frac{\sqrt{2}}{\sqrt{n+1}}+\gamma_n\right\}\\[0.5em] &\qquad+|M(g';x)|\frac{\ln(2+\mu)}{\ln(1+\mu)}\left\{\frac{1}{2(n+1)}+\frac{\sqrt{2}}{\sqrt{n+1}}+\gamma_n\right\}\\[0.5em]
    &= \frac{\ln(2+\mu)}{\ln(1+\mu)}\Bigg(\frac{1}{(1+\mu)\ln(1+\mu)}|g(x)|+|M(g';x)|\Bigg)T_n\,,
\end{align*}where \(T_n\) is defined in \eqref{Tn}. Now, for every fixed $f \in L^p([0,1])$, using \eqref{DisNormap} and then the above inequalities together with \eqref{Cp}, we obtain
\begin{align*}
    \|\mathscr{L}_n^Kf-f\|_p&\le\, \frac{\ln(2+\mu)}{\ln(1+\mu)} \Bigg[(K_\mu+1)||f-g||_p+2^{\frac{p-1}{p}}\frac{\ln(2+\mu)}{\ln(1+\mu)}\\[0.5em]
    &\qquad\cdot\left(\frac{1}{(1+\mu)\ln(1+\mu)}\|g\|_p+C_p\|g'\|_p\right)T_n\Bigg]\\[0.5em]
    &\le \frac{\ln(2+\mu)}{\ln(1+\mu)} \Bigg[(K_\mu+1)||f-g||_p+2^{\frac{p-1}{p}}\frac{\ln(2+\mu)}{\ln(1+\mu)}\\[0.5em]&\qquad \cdot C_{p,\mu}\Big(||g||_p+||g'||_p\Big)T_n\Bigg],
\end{align*}
where 
$$C_{p,\mu}:=\max\left\{\frac{1}{(1+\mu)\ln(1+\mu)};C_p\right\}.$$
Finally, passing to the infimum with respect to $g\in W^{1,p}([0,1])$ and using the definition of $K$-functional $\widetilde{\mathcal{K}}(f,t)_p$, we immediately obtain the thesis, that is,
\begin{align*}
    \|\mathscr{L}_n^Kf-f\|_p\le\frac{\ln(2+\mu)}{\ln(1+\mu)}(K_\mu+1)\,\widetilde{\mathcal{K}}(f,\Gamma_n)_p,
\end{align*} where 
\begin{align*}
    \Gamma_n:=2^{\frac{p-1}{p}}\,C_{p,\mu}\,\frac{\ln(2+\mu)}{\ln(1+\mu)}\,(K_\mu+1)^{-1}\,{T_n},
\end{align*} 
for $n\in \mathbb{N}$ sufficiently large.\\
\end{proof}

\begin{remark}
\normalfont 
The quantity $T_n$ appearing in theorems \ref{ThPeetre1} and \ref{ThPeetre2} is defined in \eqref{Tn}, with
\[
\gamma_n:=\max_{x\in[0,1]}\big(a_{n+1}(x)-x\big),
\]
where $a_{n+1}$ is given in \eqref{an+1}. 
We point out that $T_n = \mathcal{O}\Big(\frac{1}{\sqrt{n}}\Big)$, as $n \to +\infty$. 
In order to study its asymptotic behaviour, and in particular that one of $\gamma_n$, we let
\[
\varepsilon_{n+1} := \frac{1}{(n+1)(1+\mu)}.
\]
so that $\varepsilon_{n+1}\to 0^{+}$ as \( n \to +\infty \). Accordingly, we rewrite
\[
a_{n+1}(x)
=\frac{\ln(1+x\varepsilon_{n+1})}{\ln(1+\varepsilon_{n+1})}.
\]
Applying the Taylor expansion of the logarithmic function and of the geometric series, we obtain, for $n$ sufficiently large,
\[
a_{n+1}(x)-x
= \varepsilon_{n+1}\,\frac{x-x^{2}}{2} + o(\varepsilon_{n+1}).
\]
Since $0\le x-x^{2}\le 1/4$, 
\[
\gamma_n=\max_{x\in[0,1]} (a_{n+1}(x)-x)
=\mathcal{O}(\varepsilon_{n+1})
=\mathcal{O}\!\left(\frac{1}{n}\right).
\]
This is also consistent with Lemma~\ref{lemma_an+1}, which ensures the uniform convergence \(a_{n+1}(x)\to x\) as \(n\to+\infty\).  
As a consequence,
\[
T_n=\frac{1}{2(n+1)}+\frac{\sqrt{2}}{\sqrt{n+1}}
+\mathcal{O}\!\left(\frac{1}{n}\right)
=\mathcal{O}\!\left(\frac{1}{\sqrt{n}}\right),
\qquad n\to+\infty.
\]
\end{remark}

\section*{Acknowledgements}
{\small The authors are members of the Gruppo Nazionale per l'Analisi Matematica, la Probabilit\`a e le loro Applicazioni (GNAMPA) of the Istituto Nazionale di Alta Matematica (INdAM), of the network RITA (Research ITalian network on Approximation), and of the UMI (Unione Matematica Italiana) group T.A.A. (Teoria dell'Approssimazione e Applicazioni). 
}

\section*{Funding}
{\small The authors L. Angeloni and D. Costarelli have been partially supported within the (1) "National Innovation Ecosystem grant ECS00000041 - VITALITY", funded by the European Union - Next-GenerationEU under the Italian Ministry of University and Research (MUR) and (2) 2025 GNAMPA-INdAM Project "MultiPolExp: Polinomi di tipo esponenziale in assetto multidimensionale e multivoco" (CUP E5324001950001).}

\section*{Conflict of interest/Competing interests}
{\small The author declares that he has no conflict of interest and competing interest.}

\section*{Availability of data and material and Code availability}
{\small Not applicable.}

\bibliographystyle{plain}

\end{document}